\documentclass[11pt]{amsart}

\usepackage{amsmath}
\usepackage{amssymb}
\usepackage{graphicx}

\newtheorem{theorem}{Theorem}[section]
\newtheorem{corollary}[theorem]{Corollary}
\newtheorem{lemma}[theorem]{Lemma}
\newtheorem{proposition}[theorem]{Proposition}
\theoremstyle{definition}
\newtheorem{definition}[theorem]{Definition}
\newtheorem{example}[theorem]{Example}
\newtheorem{remark}[theorem]{Remark}

\numberwithin{equation}{section}

\title[Set-inclusive generalized equations via convex analysis]
{Solution analysis for a class of set-inclusive
generalized equations: a convex analysis approach}

\author[A. Uderzo]{Amos Uderzo}

\address[A. Uderzo]{Dept. of Mathematics and Applications, University
of Milano - Bicocca, Milano, Italy}
\email{{\tt amos.uderzo@unimib.it}}


\keywords{Generalized equation, constraint system, solvability, error bounds,
subdifferential, contingent cone, prederivative}

\subjclass[2010]{49J53, 49J52, 46N10, 90C30}

\date{\today}


\newcommand{\R}{\mathbb R}
\newcommand{\N}{\mathbb N}

\newcommand{\Z}{\mathbb Z}

\newcommand{\X}{\mathbb X}
\newcommand{\Y}{\mathbb Y}
\newcommand{\Uball}{{\mathbb B}}
\newcommand{\Usfer}{{\mathbb S}}

\newcommand{\dom}{{\rm dom}\, }

\newcommand{\nullv}{\mathbf{0}}

\newcommand{\wclco}{\overline{\rm conv}{\,}^*\, }

\newcommand{\inte}{{\rm int}\, }
\newcommand{\weakstar}{weak${}^*\, $ }

\newcommand{\Lin}{\mathcal{L}}
\newcommand{\Conv}{{\mathcal C}}
\newcommand{\BConv}{\mathcal{BC}}

\newcommand{\sur}{{\rm sur}\, }

\newcommand{\GE}{{\rm GE}\,}
\newcommand{\IGE}{{\rm IGE}\,}
\newcommand{\VOP}{{\rm VOP}\,}

\newcommand{\parord}{\le_{{}_C}}

\newcommand{\stsl}[1]{|\nabla #1|}
\newcommand{\Solv}[1]{{\mathcal Sol}(#1)}
\newcommand{\dcone}[1]{{#1}^{{}^\ominus}}
\newcommand{\erbom}[1]{\tau_{#1}}
\newcommand{\derbom}[1]{\dcone{\tau_{#1}}}
\newcommand{\charfan}[1]{\varphi_{#1}}
\newcommand{\Bconst}[1]{{\flat}(#1)}
\newcommand{\dBconst}[1]{{\flat}^*(#1)}

\newcommand{\ball}[2]{{\rm B}(#1, #2)}

\newcommand{\dist}[2]{{\rm dist}\left(#1,#2\right)}

\newcommand{\Tang}[2]{{\rm T}(#1;#2)}

\newcommand{\supf}[2]{\varsigma({#1,#2})}
\newcommand{\charfun}[2]{\varphi_{#1,#2}}
\newcommand{\dcharfun}[2]{\varphi_{#1,#2}^{{}^\ominus}}
\newcommand{\erboc}[2]{|\partial{#1}|{(#2)}}
\newcommand{\derboc}[2]{|\dcone{\partial}{#1}|{(#2)}}


\begin{document}

\begin{abstract}
In the present paper, classical tools of convex analysis are used to study
the solution set to a certain class of set-inclusive generalized
equations. A condition for the solution existence and global error bounds is
established, in the case the set-valued term appearing in the generalized equation is concave.
A functional characterization of the contingent cone to the solution set is provided
via directional derivatives. Specializations of these results are also considered
when outer prederivatives can be employed.
\end{abstract}

\maketitle


\begin{flushright}
{\small
 ``The value of convex analysis is still not \\ fully appreciated (by the non
 specialists)", \\
J. M. Borwein, \cite{Borw17}}
\end{flushright}

\vskip1cm


\section{Introduction}

The term ``generalized equation" denotes a widely recognized
format for modeling a broad variety of problems arising in
optimization and variational analysis. the successful employment
of such a format rests upon its main distinguishing feature, namely
the capability of involving inclusions, multi-valued mappings and
sets. Indeed, inclusions (or, more generally, one-side relations),
multi-valued mappings and sets (the latter ones, handled as a whole)
are the basic elements on which the modern theory of optimization
and variational analysis is built.

The type of generalized equation mainly studied in the last decades
is of the form
$$
   \hbox{find $x\in S$ such that $\nullv\in f(x)+F(x)$},
   \leqno (\GE)
$$
where $f:\X\longrightarrow\Y$ and $F:\X\rightrightarrows\Y$ are
given single-valued and set-valued mappings, respectively, and
$S\subseteq\X$ and $C\subseteq\Y$ are given subsets of vector spaces,
with null element $\nullv$. Such a format was distilled as a unifying device
to cover traditional equality/inequality systems, occurring as
constraints in mathematical programming problems, as well as
variational inequalities (and hence, complementarity problems)
differential inclusions, coincidence (and hence, fixed point)
problems, optimality (included Lagrangian) conditions for
variously constrained optimization problems.

The present paper deals instead with generalized equations of a different
form, namely
$$
   \hbox{find $x\in S$ such that $F(x)\subseteq C$},
   \leqno (\IGE)
$$
where $F:\X\rightrightarrows\Y$ is a set-valued mapping
between Banach spaces, $C\subseteq\Y$ a (nonempty) closed, convex
set and $S\subseteq\X$. Generalized equations of this type will
be called ``set-inclusive".

Generalized equations like $(\IGE)$ have been so far less investigated than
$(\GE)$, for which a well-developed theory is now at disposal
(see, among others, \cite{DonRoc14,Izma14,Mord94,Mord06,Robi79,Robi80,Robi83}).
Nevertheless, there are several contexts in which the format of set-inclusive
generalized equations does emerge. Some of these contexts are illustrated
below.

\vskip.5cm

\noindent{\it 1. Robust approach to uncertain constraint systems}:
Let us consider a cone constraint system formalized by the parametric
inclusion
\begin{equation}     \label{in:parcontsys}
  f(x,\omega)\in C,
\end{equation}
where $f:\R^n\times\Omega\longrightarrow\R^m$ is a given mapping
and $C$ is a closed, convex cone in $\R^m$. For instance,
if $\R^m=\{\nullv\}\times\R^q_-$, with $\nullv\in\R^p$ and
$p+q=m$, then $(\ref{in:parcontsys})$ turns out to represent
a system of finitely many equalities and inequalities, which
is a typical constraint system in mathematical programming.
The parameter $\omega\in\Omega$ entering the argument of $f$
describes uncertainties often occurring in real-world optimization
problems. In fact, the feasible region of such problems,
as well as their objective function, may happen to be affected by
computational and estimation errors, and conditioned by unforeseeable
future events. Whereas a stochastic optimization approach
requires the probability distribution of the uncertain parameter
to appear among the problem data, robust optimization assumes that
no stochastic information on the uncertain parameter is at disposal.
This opens the question on what can be admitted as a solution to
the system $(\ref{in:parcontsys})$, in consideration of possible
outcomes depending on the parameter $\omega$.
According to the robust approach, an element $x\in\R^n$ is considered
to be a feasible solution if it remains feasible in every possibly occurring
scenario, i.e. if it is such that
$$
   f(x,\omega)\in C,\quad\forall \omega\in\Omega.
$$
Such an approach naturally leads to introduce the robust constraining
mapping $F:\R^n\rightrightarrows\R^m$, defined as
\begin{equation}     \label{eq:robconstmap}
  F(x)=f(x,\Omega)=\{f(x,\omega):\ \omega\in\Omega\},
\end{equation}
and to consider set-inclusive generalized equations like $(\IGE)$.

\vskip.5cm

\noindent{\it 2. Ideal solutions in vector optimization}:
Let $f:\X\longrightarrow\Y$ be a function taking values in a
vector space $\Y$ partially ordered by its (positive) cone
$\Y_+$ and let $R\subseteq\X$ be a nonempty set.
Recall that $\bar x\in R$ is said to be an ideally $\Y_+$-efficient
solution for the related vector optimization problem
$$
  \Y_+\hbox{-}\min f(x) \quad\hbox{ subject to }\qquad x\in R,
  \leqno (\VOP)
$$
provided that
$$
   f(R)\subseteq f(\bar x)+\Y_+.
$$
Thus, by introducing the set-valued mapping $F:\X\rightrightarrows\Y$
defined by $F(x)=f(R)-f(x)$, one gets that the set of all ideally
$\Y_+$-efficient solutions coincides with the solution set of a
set-inclusive generalized equation as $(\IGE)$, with $C=\Y_+$. It is worth
recalling that any ideal $\Y_+$-efficient solution is, in particular,
also $\Y_+$-efficient (for more details on optimality notions in vector
optimization and their relationships, see \cite{Jahn04}).

\vskip.5cm

\noindent{\it 3. Constraints on production in mathematical economics}: In
mathematical economics, a production technology, i.e. the description of
quantitative relationships between inputs and outputs, can be conveniently
formalized by a set-valued mappings $F:\R^n\rightrightarrows\R^m$,
associating with an output $x\in\R^n$ the set of all inputs $y\in\R^m$
which are needed to produce $x$, according to the technology at the issue
(meaning that the same output can be obtained by combining inputs in
different ways). In other terms, if $x$ is seen as a (vector) production
level, $F(x)$ represents the corresponding isoquant (see \cite{Fare88}).
In this setting, given a closed subset $C\subseteq\R^m$, a set-inclusive
generalized equation $(\IGE)$ describes the presence of constraints,
due to specific requirements on the input employment, which are
not intrinsic to the production technology itself.

\vskip.5cm

In the absence of an ad hoc theory,
the investigations exposed in the present paper aim at providing
elements for a solution analysis of $(\IGE)$. More precisely,
they focus on solvability and global error bound conditions
for a $(\IGE)$ and, by means of them, they leads to obtain
first-order approximations of its solution set.
Apart from the very recent paper \cite{Uder19},
to the best of the author's knowledge, up to now generalized equations
in the form $(\IGE)$ have been considered only in \cite{Cast99},
where, nonetheless, the solution existence is taken as an assumption
in order to establish an error bound result.
In the same vein as in \cite{Cast99}, in the current study the
task is undertaken by using tools and techniques of convex analysis.
In doing so, the author, who ascribes himself to the class of non
specialists of convex analysis, would like to make an attempt
to contrast the phenomenon signaled by J.M. Borwein (see the quotation
put as an incipit for the present paper).

Whereas in \cite{Uder19} the problem is addressed by introducing
the metric $C$-increase property, 
the main idea behind the analysis here proposed is borrowed, with
some modifications, from \cite{Cast99}. It relies on the use of the
Minkowski-H\"ormander duality for passing from relations
between closed convex sets to corresponding relations between
convex functions. This passage is actually the key step, paving the
way to a functional characterization of solutions to $(\IGE)$.
This, in turn, triggers well-known techniques now at disposal
in variational analysis for treating such issues as solvability and
error bounds of convex inequalities. Such an approach can be said to
act in accordance with the celebrated Euler's spirit: indeed, solutions
to $(\IGE)$ are regarded as minimizers of certain functionals.
The fundamental assumptions allowing one to conduct the aforementioned
analysis, while remaining within the realm of convex analysis,
is the concavity of the set-valued mapping $F$ and the convexity of
the subset $C$. It seems that the former one has not yet found great
application in variational analysis, even if it must be said that, in a
special case, it already appeared, at the very initial stage
of nonsmooth analysis, within the theory of fans (see Example
\ref{ex:fan}). In fact, the concavity of fans will be exploited here
to specialize the main results, when outer prederivatives are
at disposal.

The contents of the paper are arranged in the subsequent sections
as follows. Section \ref{Sect:2} collects the essential technical preliminaries:
basic elements of convex and variational analysis are recalled, the
crucial notion of concavity for set-valued mappings is discussed
through several examples, some ancillary results are derived.
In Section \ref{Sect:3} the main results of the paper are exposed:
the first one is a sufficient condition for the solvability of
a $(\IGE)$ with a related error bound, while the second is a functional
characterization of the contingent cone to the solution set.
Section \ref{Sect:4} complements the previous section by providing
an estimate of the constant, appearing in the aforementioned findings,
with tools of set-valued analysis.

\vskip1cm


\section{Tools of analysis}   \label{Sect:2}

The notations in use throughout the paper are mainly standard.
Quite often, capital letter in bold will denote real Banach spaces.
$\Conv(\Y)$ denotes the class of all closed and convex subsets of
a Banach space $\Y$, while $\BConv(\Y)$ its subclass consisting of
all bounded, closed and convex sets.
The null vector in a Banach space is denoted by $\nullv$.
In a metric space setting, the closed  ball centered at an element $x$,
with radius $r\ge 0$, is indicated with $\ball{x}{r}$. In particular,
in a Banach space, $\Uball=\ball{\nullv}{1}$, whereas $\Usfer$ stands
for the unit sphere.
Given a subset $S$ of a Banach space, $\inte S$ denotes its interior.
The distance of a point $x$ from $S$ is denoted by $\dist{x}{S}$.
By $\Lin(\X,\Y)$ the Banach space of all bounded linear operators
acting between $\X$ and $\Y$ is denoted, equipped with the operator norm
$\|\cdot\|_\Lin$. In particular, $\X^*=\Lin(\X,\R)$ stands for the dual
space of $\X^*$, in which case $\|\cdot\|_\Lin$ is simply marked by $\|\cdot\|$.
The null vector, the unit ball and the unit sphere in a dual space
will be marked by $\nullv^*$, $\Uball^*$, and $\Usfer^*$, respectively.
The duality pairing a Banach space with its dual will be denoted
by $\langle\cdot,\cdot\rangle$. If $S$ is a subset of a dual space,
$\wclco S$ stands for its convex closure with respect to the
\weakstar topology.
Given a function $\varphi:\X\longrightarrow\R\cup\{\mp\infty\}$,
by $[\varphi\le 0]=\varphi^{-1}((-\infty,0])$ its sublevel set is
denoted, whereas $[\varphi>0]=\varphi^{-1}((0,+\infty))$ denotes
the strict superlevel set of $\varphi$.
The acronyms l.s.c., u.s.c. and p.h. stand for lower semicontinuous, upper
semicontinuous and positively homogeneous, respectively.
The symbol $\dom\varphi$ indicates the domain of the function $\varphi$.
The solution set to $(\IGE)$ is denoted by $\Solv{\IGE}$.

\subsection{Convex analysis tools}

The approach of analysis here proposed is strongly based on
the employment of the support function associated with an
element of $\Conv(\Y)$, henceforth denoted by $\supf{\cdot}{C}
:\Y^*\longrightarrow\R\cup\{\pm\infty\}$, namely the function
defined by
$$
   \supf{y^*}{C}=\sup_{y\in C}\langle y^*,y\rangle,
$$
where, consistently with the convention $\sup\varnothing=-\infty$, it is
$\supf{\cdot}{\varnothing}=-\infty$.
In this perspective, the following remark gathers some basic
well-known properties of support functions that will be exploited
in the sequel (see, for instance, \cite{Zali02}).

\begin{remark}    \label{rem:supfprops}
(i) For any $C\in\Conv(\Y)$, $\supf{\cdot}{C}$ is a (norm) l.s.c., p.h.
convex (sublinear) function on $\Y^*$. Furthermore, $\supf{\cdot}{C}$
is also l.s.c. with respect to the \weakstar topology on $\Y^*$.

(ii) Let $C,\, D\in\Conv(\Y)$ and let $\lambda,\, \mu$ be nonnegative
reals. Then, it holds
$$
  \supf{\cdot}{\lambda C+\mu D}=\lambda\supf{\cdot}{C}+
  \mu\supf{\cdot}{D}.
$$

(iii) Let $C,\, D\in\Conv(\Y)$. Then, it holds
$$
  C\subseteq D \qquad \hbox{ iff }\qquad
  \supf{y^*}{C}\le\supf{y^*}{D},\quad\forall y^*\in\Y^*.
$$
It is relevant to add that such a characterization of the inclusion
$ C\subseteq D$ holds true even if $\Y^*$ is replaced with $\Uball^*$,
as the support function is p.h. (remember point (i) in the current
remark).

(iv) If, in particular, it is $C\in\BConv(\Y)\backslash\{\varnothing\}$,
then $\supf{\cdot}{C}:\Y^*\longrightarrow\R$ is (Lipschitz)
continuous on $\Y^*$.

(v) Let $C\in\Conv(\X^*)$ and consider its support $\supf{\cdot}{C}:
\X\longrightarrow\R\cup\{\pm\infty\}$, i.e. $\supf{x}{C}=\sup_{x^*\in C}
\langle x^*,x\rangle$. Then $\nullv^*\in C$ iff $[\supf{\cdot}{C}\ge
0]=\X$. More precisely, the following estimate is valid
$$
   -\inf_{v\in\Uball}\supf{v}{C}\le\dist{\nullv^*}{C}.
$$
Indeed, one has
\begin{eqnarray*}
   -\inf_{v\in\Uball}\supf{v}{C} &=&\sup_{v\in\Uball}\inf_{x^*\in C}
   \langle x^*,-v\rangle=\sup_{v\in\Uball}\inf_{x^*\in C}
   \langle x^*,v\rangle\le\inf_{x^*\in C}\sup_{v\in\Uball}
   \langle x^*,v\rangle  \\
   &=& \inf_{x^*\in C} \|x^*\|=\dist{\nullv^*}{C}.
\end{eqnarray*}
\end{remark}

Following a line of though well recognized in the literature on
the subject (see \cite{FaHeKrOu10} and references therein),
the main condition for achieving solvability and error bounds
for $(\IGE)$ will be expressed in dual terms, namely by means of
constructions in the space $\X^*$, involving the subdifferential
in the sense of convex analysis. Recall
that, given a convex function $\varphi:\X\longrightarrow\R
\cup\{+\infty\}$ and $x_0\in\dom\varphi$, its subdifferential
$\partial\varphi(x_0)$ at $x_0$ is defined as
$$
  \partial\varphi(x_0)=\{x^*\in\X^*:\ \langle x^*,x-x_0\rangle
  \le\varphi(x)-\varphi(x_0),\quad\forall x\in\X\}.
$$
As convex functions may happen to be nonsmooth, such a
tool of analysis can be regarded as a surrogate of a
derivative, whenever the latter fails to exist. Therefore,
the mentioned condition for solvability and error bound
can be said also to be of infinitesimal type.

\begin{remark}   \label{rem:DubMil}
The following subdifferential calculus rule, which is a generalization
to compact index sets of the well-known Duboviskii-Milyutin rule,  will
be applied in subsequent arguments: let $\Xi$ be a
separated compact topological space and let $\varphi:\Xi\times\X
\longrightarrow\R$ be a given function. Suppose that:

(i) the function $\xi\mapsto\varphi(\xi,x)$ is u.s.c. on $\Xi$,
for every $x\in\X$;

(ii) the function $x\mapsto\varphi(\xi,x)$ is convex and
continuous at $x_0\in\X$, for every $\xi\in\Xi$.

\noindent Under the above assumptions, by introducing the
(clean-up) subset
$\Xi_{x_0}=\{\xi\in\Xi:\ \varphi(\xi,x_0)=\sup_{\xi\in\Xi}\varphi
(\xi,x_0)\}$, it results in
$$
  \partial\left(\sup_{\xi\in\Xi}\varphi(\xi,\cdot)\right)(x_0)
  =\wclco\left(\bigcup_{\xi\in\Xi_{x_0}}\partial\varphi(\xi,\cdot)
  (x_0)\right)
$$
(see \cite[Theorem 2.4.18]{Zali02}).
\end{remark}

The directional derivative of a function $\varphi:\X\longrightarrow
\R\cup\{+\infty\}$ at $x_0\in\dom\varphi$ in the direction $v\in\X$
is denoted by $\varphi'(x_0;v)$. Recall that whenever $\varphi$ is
a convex function continuous at $x_0$, then $\partial\varphi(x_0)$
is a nonempty, \weakstar compact convex subset of $\X^*$ and the following
Moreau-Rockafellar representation formula holds
\begin{equation}     \label{eq:MorRoc}
  \varphi'(x_0;v)=\supf{v}{\partial\varphi(x_0)},\quad\forall
  v\in\X
\end{equation}
(see \cite[Theorem 2.4.9]{Zali02}).

The special class of $(\IGE)$, for which the solution analysis
will be carried out, is singled out by a geometric property of
the set-valued mapping $F$ appearing in $(\IGE)$. Such a property,
which is introduced next under the term concavity, has merely to do
with the vector structure of the spaces $\X$ and $\Y$.

\begin{definition}     \label{def:concavesvmap}
A set-valued mapping $F:\X\rightrightarrows\Y$ between Banach spaces
is said to be {\it concave} on $\X$ if it holds
\begin{equation}     \label{def:concavpro}
  F(tx_1+(1-t)x_2))\subseteq tF(x_1)+(1-t)F(x_2),\quad
  \forall x_1,\, x_2\in\X,\ \forall t\in [0,1].
\end{equation}
\end{definition}

\begin{remark}
Whereas the notion of convexity for set-valued mappings is
equivalent to the convexity of their graph, thereby entailing
remarkable properties on their behaviour (e.g. a convex multivalued
mapping takes always convex values and carries convex sets into
convex sets, their inverse is still convex, and so on
\footnote{For a view on properties of
convex set-valued mappings of interest in optimization, the reader
is referred to \cite{Borw81}.}),
this fails generally to be true for the notion of concavity,
as proposed in Definition \ref{def:concavesvmap}. For instance,
the mapping $F:\R\rightrightarrows\R$ defined by $F(x)=\{-1,\, 1\}$
for every $x\in\R$, fulfils Definition \ref{def:concavesvmap},
but its values are not convex, for every $x\in\R$.
\end{remark}

Below, some circumstances in which the property of concavity
for set-valued mappings emerges are presented.

\begin{example}   \label{ex:concsvmap}
(i) Let $\varphi:\X\longrightarrow\R$ be a convex function. Then,
it is possible to show that the (hypographical) set-valued mapping
${\rm Hyp}_\varphi:\X\rightrightarrows\R$, defined by
$$
  {\rm Hyp}_\varphi(x)=\{r\in\R:\ r\le\varphi(x)\},
$$
is concave.

(ii) In a similar manner, it is possible to show that is concave
the (epigraphical) set-valued mapping ${\rm Epi}_\psi:\X\rightrightarrows\R$,
defined by
$$
  {\rm Epi}_\psi(x)=\{r\in\R:\ r\ge\psi(x)\},
$$
provided that $\psi:\X\longrightarrow\R$ is a concave function.

(iii) By combining what observed in (i) and (ii) one gets that
the set-valued mapping $F:\X\rightrightarrows\R$ defined by
$$
  F(x)=\{r\in\R:\ \psi(x)\le r\le\varphi(x)\},
$$
with $\psi(x)\le\varphi(x)$ for every $x\in\X$, is concave on $\X$.

(iv) Let $\Y$ be a Banach space endowed with a partial ordering
$\parord$, defined by a closed, convex cone $C\subseteq\Y$, and let
$f:\X\longrightarrow\Y$ be a $C$-convex mapping, i.e. any mapping
satisfying the condition
$$
  f(tx_1+(1-t)x_2))\parord tf(x_1)+(1-t)f(x_2),\quad
  \forall t\in [0,1],\ \forall x_1,\, x_2\in\X
$$
(for more on this class of mappings, see \cite{Borw82}).
Then, the set-valued mapping ${\rm Hyp}_f:\X\rightrightarrows\Y$,
defined by
$$
  {\rm Hyp}_f(x)=\{y\in\Y:\ y\parord f(x)\}
$$
is concave on $\X$. To see this fact, take arbitrary $x_1,\, x_2\in\X$
and $t\in [0,1]$, and let $y$ be an arbitrary element in the set
${\rm Hyp}_f(tx_1+(1-t)x_2)$. Since it holds
$$
  y\parord f(tx_1+(1-t)x_2)\parord tf(x_1)+(1-t)f(x_2),
$$
then, by setting $c=tf(x_1)+(1-t)f(x_2)-y\in C$, one can write
\begin{equation}    \label{eq:CconfconcaveF}
    y=t(f(x_1)-c)+(1-t)(f(x_2)-c).
\end{equation}
By observing that
$$
  f(x_1)-c\in {\rm Hyp}_f(x_1)\qquad\hbox{ and }\qquad
  f(x_2)-c\in {\rm Hyp}_f(x_2),
$$
equality $(\ref{eq:CconfconcaveF})$ says that $y\in t{\rm Hyp}_f
(x_1)+(1-t){\rm Hyp}_f(x_2)$, thereby showing that inclusion
$(\ref{def:concavpro})$ happens to be satisfied. Notice that, taking
$\Y=\R$ with $C=[0,+\infty)$, example (iv) subsumes example (i).
\end{example}

\begin{example}[Radial mapping]     \label{ex:radconcmap}
Given a convex function $\rho:\X\longrightarrow [0,+\infty)$, let
$F:\X\rightrightarrows\Y$ be defined by
$$
  F(x)=\rho(x)\Uball=\ball{\rho(x)}{\nullv},
$$
where $\Uball$ stands here for the unit ball of the space $\Y$.
It is readily seen that $F$ is a concave set-valued mapping.
\end{example}

\begin{example}[Fan]     \label{ex:fan}
After \cite{Ioff81}, a set-valued mapping $A:\X\rightrightarrows\Y$
between Banach spaces is said to be a fan if all the following
conditions are fulfilled:

(i) $\nullv\in\ A(\nullv)$;

(ii) $A(\lambda x)=\lambda A(x)$, $\forall x\in\X$ and $\forall\lambda>0$;

(iii) $A(x)\in\Conv(\Y)$, $\forall x\in\X$;

(iv) $A(x_1+x_2)\subseteq A(x_1)+A(x_2)$, $\forall x_1,\, x_2\in\X$.

\noindent Owing to conditions (ii) and (iv), it is clear that any fan is
a (p.h.) concave set-valued mapping. As a particular example of fan,
one can consider set-valued mappings which are generated by families
of linear bounded operators. More precisely, let $\mathcal{G}\subseteq\Lin
(\X,\Y)$ be a convex set weakly closed with respect to the weak topology
on $\Lin(\X,\Y)$ and let
$$
  A_\mathcal{G}(x)=\{y\in\Y:\ y=\Lambda x,\, \Lambda\in\mathcal{G}\}.
$$
The set-valued mapping $A_\mathcal{G}:\X\rightrightarrows\Y$ is known to
be a particular example of fan (note however that there are fans which can not
be generated by families of linear bounded operators).

Notice that, if in Example \ref{ex:concsvmap}(iv) the mapping $f$ is
assumed to be also p.h., the resulting hypographical set-valued mapping
${\rm Hyp}_f$ turns out to be a fan. The same if in Example \ref{ex:radconcmap}
function $\rho$ is assumed to be sublinear on $\X$.

Fans may be employed in the robust approach to the uncertain constraint
system analysis. Let $\Omega$ be an arbitrary set of parameters
and let $p:\Omega\longrightarrow\Lin(\X,\Y)$ be a given mapping,
such that $p(\Omega)$ is a weakly closed and convex subset of
$\Lin(\X,\Y)$. Consider the mapping $f:\X\times\Omega\longrightarrow
\Y$ defined as
$$
  f(x,\omega)=p(\omega)x,
$$
which formalizes a uncertain constraint system of the type $(\ref{in:parcontsys})$.
Following the robust approach, one has to handle the set-valued mapping
$F:\X\rightrightarrows\Y$ given by
$$
  F(x)=f(x,\Omega)=\{y\in\Y:\ y=\Lambda x,\, \Lambda\in p(\Omega)\}.
$$
As a fan, $F$ turns out to be a concave mapping on $\X$. It is worth
noting that, whenever the set $p(\Omega)$ is $\|\cdot_\|\Lin$-bounded,
$F$ takes nonempty closed, convex and bounded values.
\end{example}

It is plain to see that if $F:\X\rightrightarrows\Y$ and $G:\X
\rightrightarrows\Y$ are concave on $\X$, so are $F+G$ and $\lambda
F$, for every $\lambda\in\R$. If $H:\X\rightrightarrows\Z$ is a
concave set-valued mapping between Banach spaces, so is the
Cartesian product mapping $F\times G:\X\rightrightarrows\Y\times\Z$,
defined by $(F\times G)(x)=F(x)\times G(x)$. Furthermore, if
$\Lambda\in\Lin(\Z,\X)$, then the set-valued mapping $F\circ\Lambda
:\Z\rightrightarrows\Y$ is still concave. Instead, if $F:\X
\rightrightarrows\Y$ is concave, its inverse set-valued mapping
$F^{-1}:\Y\rightrightarrows\X$ generally fails to be so.

Given a generalized equation in the form $(\IGE)$, according to
the approach here proposed, the functions $\charfun{F}{C}:\X
\longrightarrow\R\cup\{+\infty\}$ and $\dcharfun{F}{C}:\X
\longrightarrow\R\cup\{+\infty\}$ defined as follows will
play a crucial role as a basic tool of analysis:
\begin{equation}    \label{eq:charfundef}
  \charfun{F}{C}(x)=\sup_{b^*\in\Uball^*}[\supf{b^*}{F(x)}-\supf{b^*}{C}]
\end{equation}
and
\begin{equation}    \label{eq:dcharfundef}
  \dcharfun{F}{C}(x)=\sup_{b^*\in\Uball^*\cap\dcone{C}}
  [\supf{b^*}{F(x)}-\supf{b^*}{C}].
\end{equation}
In the lemma below some useful properties of $\charfun{F}{C}$ and
$\dcharfun{F}{C}(x)$ are deduced from assumptions on $F$ and $C$.

\begin{lemma}    \label{lem:charfunprops}
Let $F:\X\rightrightarrows\Y$ be a set-valued mapping
between Banach spaces.

(i) If $F(x)\in\BConv(\Y)\backslash\{\varnothing\}$ for every
$x\in\X$, then $\charfun{F}{C}$ is a nonnegative and real-valued
function, i.e. $\dom\charfun{F}{C}=\X$;

(ii) If $F$ is concave on $\X$, then $\charfun{F}{C}$ is
convex on $\X$;

(iii) If $F$ is p.h. and $C$ is a cone, then $\charfun{F}{C}$ is p.h..
\end{lemma}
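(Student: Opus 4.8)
The plan is to handle the three claims in order, leaning in each case on the properties of support functions collected in Remark 2.2 and, for the convexity part, on the concavity hypothesis combined with the fact that a supremum of convex functions is convex.

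For part (i), fix $x\in\X$. Since $F(x)\in\BConv(\Y)\backslash\{\varnothing\}$, Remark 2.2(iv) tells us that $b^*\mapsto\supf{b^*}{F(x)}$ is finite (indeed Lipschitz) on $\Y^*$, while $C\in\Conv(\Y)$ is nonempty so $\supf{b^*}{C}>-\infty$ everywhere, and on the bounded set $\Uball^*$ one has $\supf{b^*}{C}\le\sup_{c\in C}\|c\|\cdot\|b^*\|$ locally — more to the point, the difference $\supf{b^*}{F(x)}-\supf{b^*}{C}$ is a well-defined extended real for each $b^*$, and taking the supremum over the bounded set $\Uball^*$ of the bounded-above quantity $\supf{b^*}{F(x)}$ keeps the supremum below $\sup_{y\in F(x)}\|y\|<+\infty$ once we also note $-\supf{b^*}{C}\le\supf{b^*}{-C}$ is controlled; so $\charfun{F}{C}(x)<+\infty$, i.e. $\dom\charfun{F}{C}=\X$. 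Nonnegativity is the cleaner point: evaluating at $b^*=\nullv^*$ gives $\supf{\nullv^*}{F(x)}-\supf{\nullv^*}{C}=0-0=0$ (here using $F(x)\neq\varnothing$ and $C\neq\varnothing$, so neither support function is $-\infty$ at $\nullv^*$), hence the supremum defining $\charfun{F}{C}(x)$ is at least $0$. One should be a little careful that when $C$ is unbounded $\supf{b^*}{C}$ may be $+\infty$ for some $b^*\in\Uball^*$, but then the bracket is $-\infty$ there and does not affect the supremum; the value at $b^*=\nullv^*$ still forces the result to be in $[0,+\infty)$.

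For part (ii), assume $F$ is concave on $\X$. The key observation is that for each fixed $b^*\in\Y^*$ the map $x\mapsto\supf{b^*}{F(x)}$ is convex on $\X$: given $x_1,x_2\in\X$ and $t\in[0,1]$, concavity of $F$ gives $F(tx_1+(1-t)x_2)\subseteq tF(x_1)+(1-t)F(x_2)$, so by Remark 2.2(iii) (monotonicity of support functions under inclusion) and then Remark 2.2(ii) (additivity and positive homogeneity of the support function in its set argument),
\[
\supf{b^*}{F(tx_1+(1-t)x_2)}\le\supf{b^*}{tF(x_1)+(1-t)F(x_2)}=t\supf{b^*}{F(x_1)}+(1-t)\supf{b^*}{F(x_2)}.
\]
Subtracting the constant $\supf{b^*}{C}=t\supf{b^*}{C}+(1-t)\supf{b^*}{C}$ shows $x\mapsto\supf{b^*}{F(x)}-\supf{b^*}{C}$ is convex for each $b^*\in\Uball^*$. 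Taking the pointwise supremum over $b^*\in\Uball^*$ preserves convexity, which is exactly $\charfun{F}{C}$, so $\charfun{F}{C}$ is convex on $\X$. (Strictly, one uses here that Remark 2.2(ii) is stated for $C,D\in\Conv(\Y)$; since $F$ need not take closed convex values one should either invoke the version of the identity $\supf{\cdot}{\lambda A+\mu B}=\lambda\supf{\cdot}{A}+\mu\supf{\cdot}{B}$ valid for arbitrary sets, or pass to closures — this is the one routine point to be tidy about.)

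For part (iii), assume $F$ is positively homogeneous and $C$ is a cone. Fix $\lambda>0$ and $x\in\X$. Then $F(\lambda x)=\lambda F(x)$, so by Remark 2.2(ii) $\supf{b^*}{F(\lambda x)}=\lambda\supf{b^*}{F(x)}$; and since $C$ is a cone, $\supf{b^*}{C}\in\{0,+\infty\}$, and in either case $\supf{b^*}{C}=\lambda\supf{b^*}{C}$. Hence for each $b^*\in\Uball^*$,
\[
\supf{b^*}{F(\lambda x)}-\supf{b^*}{C}=\lambda\bigl(\supf{b^*}{F(x)}-\supf{b^*}{C}\bigr),
\]
and taking the supremum over $b^*\in\Uball^*$ gives $\charfun{F}{C}(\lambda x)=\lambda\charfun{F}{C}(x)$. (If $F$ is only p.h. in the usual sense of $F(\nullv)\ni\nullv$ or $F(\nullv)=\{\nullv\}$, the case $\lambda=0$ is handled directly, giving $\charfun{F}{C}(\nullv)=0$.) The main thing to watch throughout all three parts is the bookkeeping with $\pm\infty$ values of the support functions — the boundedness hypothesis in (i) is what removes that difficulty there, while in (ii) and (iii) one just checks the degenerate cases do not spoil the inequalities. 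I do not expect any genuine obstacle; the only mildly delicate point is ensuring the additivity property of support functions is invoked in a form applicable to the possibly non-convex sets $F(x)$.
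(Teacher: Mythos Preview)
Your approach matches the paper's for all three parts: nonnegativity via $b^*=\nullv^*$, convexity of $x\mapsto\supf{b^*}{F(x)}$ from concavity of $F$ plus monotonicity and additivity of support functions (then sup-of-convex-is-convex), and positive homogeneity from $F(\lambda x)=\lambda F(x)$ together with $\lambda C=C$. The one spot to tighten is the finiteness bound in (i): the inequality $-\supf{b^*}{C}\le\supf{b^*}{-C}$ is true but useless when $C$ is unbounded, since the right-hand side can be $+\infty$; what you actually need is a uniform lower bound on $\supf{b^*}{C}$ over $\Uball^*$, and the paper gets this cleanly by fixing any $c_0\in C$ and observing $\supf{b^*}{C}\ge\langle b^*,c_0\rangle\ge -\|c_0\|$ for all $b^*\in\Uball^*$.
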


\begin{proof}
(i) First of all observe that, independently of the boundedness
assumption, one has by definition
$$
  \charfun{F}{C}(x)\ge\supf{\nullv^*}{F(x)}-\supf{\nullv^*}{C}=0,
  \quad\forall x\in\X,
$$
so $\charfun{F}{C}$ takes nonnegative values only (and hence,
is bounded from below). Now, fix an arbitrary
$x\in\X$ and, according to the assumption, suppose that there exists $\kappa>0$
such that $F(x)\subseteq\kappa\Uball$. By recalling Remark \ref{rem:supfprops}
(iii) and (ii), one finds for every $b^*\in\Uball^*$
$$
  \supf{b^*}{F(x)}\le\supf{b^*}{\kappa\Uball}=\kappa\supf{b^*}{\Uball}=
  \kappa.
$$
If $c_0\in C$, one has
$$
  \supf{b^*}{C}\ge\langle b^*,c_0\rangle\ge -\|c_0\|,
  \quad\forall b^*\in\Uball^*,
$$
wherefrom it follows
$$
  \inf_{b^*\in\Uball^*}\supf{b^*}{C}\ge -\|c_0\|.
$$
Consequently, one obtains
$$
  \charfun{F}{C}(x)\le \sup_{b^*\in\Uball^*}\supf{b^*}{F(x)}-
  \inf_{b^*\in\Uball^*}\supf{b^*}{C}\le\kappa+\|c_0\|<+\infty.
$$

(ii) Let $x_1,\, x_2\in\X$ and $t\in [0,1]$. According to the
assumption of the concavity on $F$, inclusion $(\ref{def:concavpro})$
holds true. By recalling Remark \ref{rem:supfprops}(iii), that
inclusion implies
\begin{equation*}
    \supf{b^*}{F(tx_1+(1-t)x_2)}\le  \supf{b^*}{tF(x_1)+
    (1-t)F(x_2)},\quad\forall b^*\in\Uball^*.
\end{equation*}
From this inequality, by using the equalities in Remark
\ref{rem:supfprops}(ii), one readily sees
\begin{equation*}
    \supf{b^*}{F(tx_1+(1-t)x_2)}\le  t\supf{b^*}{F(x_1)}+
    (1-t)\supf{b^*}{F(x_2)},\quad\forall b^*\in\Uball^*,
\end{equation*}
which shows the convexity of the function $x\mapsto \supf{b^*}{F(x)}$,
for each $b^*\in\Uball^*$. By virtue of well-known properties
of persistence of convexity under such operations on functions
as translation and taking the supremum over an arbitrary index set,
from the convexity of each function $x\mapsto
\supf{b^*}{F(x)}$ one deduces the convexity of $\charfun{F}{C}$.

(iii) This fact is a straightforward consequence of the property
of support functions recalled in Remark \ref{rem:supfprops}(iii) and
the equality $C=\lambda C$, which is valid for every $\lambda>0$ because
$C$ is a cone.
\end{proof}

\begin{lemma}[Continuity of $\charfun{F}{C}$]   \label{lem:charfuncon}
Let $F:\X\rightrightarrows\Y$ be a set-valued mapping between Banach
spaces. Suppose that:

(i) $F(x)\in\BConv(\Y)\backslash\{\varnothing\}$ for every $x\in\X$;

(ii) $F$ is concave on $\X$;

(iii) $F$ is locally bounded around some $x_0\in\X$, i.e. there
exist constants $\delta,\, \kappa>0$ such that
$$
  F(x)\subseteq\kappa\Uball,\quad\forall x\in\ball{x_0}{\delta}.
$$
Then, function $\charfun{F}{C}$ is continuous on $\X$.
\end{lemma}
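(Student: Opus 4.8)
The plan is to combine the two preceding lemmas with the classical continuity principle for convex functions on Banach spaces. First I would observe that assumptions (i) and (ii) are exactly the hypotheses of Lemma \ref{lem:charfunprops}(i)--(ii), so $\charfun{F}{C}$ is a convex function that is finite — indeed nonnegative and real-valued — at every point of $\X$; in particular $\dom\charfun{F}{C}=\X$.

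The key quantitative input is then that $\charfun{F}{C}$ is bounded above on the ball $\ball{x_0}{\delta}$ supplied by assumption (iii). This amounts to making uniform in $x\in\ball{x_0}{\delta}$ the estimate already performed in the proof of Lemma \ref{lem:charfunprops}(i): from $F(x)\subseteq\kappa\Uball$ and Remark \ref{rem:supfprops}(ii)--(iii) one gets $\supf{b^*}{F(x)}\le\kappa$ for all $b^*\in\Uball^*$, while fixing any $c_0\in C$ gives $\supf{b^*}{C}\ge-\|c_0\|$ for all $b^*\in\Uball^*$; hence $\charfun{F}{C}(x)\le\kappa+\|c_0\|$ for every $x\in\ball{x_0}{\delta}$.

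Finally I would invoke the standard fact that a convex function $\varphi:\X\longrightarrow\R$ which is bounded above on some nonempty open subset of $\X$ is automatically locally Lipschitz, hence continuous, on the whole space $\X$ (see, e.g., \cite{Zali02}). Applying this to $\varphi=\charfun{F}{C}$, with the open ball $\inte\ball{x_0}{\delta}$ as the distinguished open set, yields continuity of $\charfun{F}{C}$ on $\X$, as claimed.

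I do not expect a genuine obstacle here. The only point needing slight care is that the local boundedness hypothesis on $F$ must be converted into an upper bound on $\charfun{F}{C}$ itself — not merely on each individual map $x\mapsto\supf{b^*}{F(x)}$ — which is taken care of by the uniform estimate above; the passage from boundedness above near one point to global continuity is a textbook result on convex functions.
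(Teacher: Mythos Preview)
Your proposal is correct and follows essentially the same route as the paper: invoke Lemma~\ref{lem:charfunprops}(i)--(ii) for convexity and finiteness, use hypothesis~(iii) together with Remark~\ref{rem:supfprops}(ii)--(iii) to get the uniform upper bound $\charfun{F}{C}(x)\le\kappa+\|c_0\|$ on $\ball{x_0}{\delta}$, and then appeal to the standard convex-analysis fact (\cite[Theorem 2.2.9]{Zali02}) that boundedness above near one point forces continuity on $\inte(\dom\charfun{F}{C})=\X$. The only cosmetic difference is that you phrase the last step as ``locally Lipschitz, hence continuous,'' whereas the paper simply says ``continuous''; both invoke the same theorem.
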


\begin{proof}
According to assertions (i) and (ii) in Lemma \ref{lem:charfunprops},
under the above assumptions the function
$\charfun{F}{C}$ is a convex function with $\dom\charfun{F}{C}=\X$.
Notice that, by virtue of hypothesis (iii), $\charfun{F}{C}$ turns
out to be bounded from above on a neighbourhood of $x_0$. Indeed,
by taking into account Remark \ref{rem:supfprops}(iii), one has
$$
  \supf{b^*}{F(x)}\le\supf{b^*}{\kappa\Uball}\le\kappa,
  \quad\forall b^*\in\Uball^*,\ \forall x\in\ball{x_0}{\delta},
$$
and hence
$$
  \charfun{F}{C}(x)\le\sup_{b^*\in\Uball^*}\supf{b^*}{F(x)}-
  \inf_{b^*\in\Uball^*}\supf{b^*}{C}\le\kappa+\|c_0\|,
  \quad\forall x\in\ball{x_0}{\delta},
$$
with $c_0\in C$.
It is a well-known fact in convex analysis that the boundedness
of a convex function on a neighbourhood of a point in its domain
implies the continuity of the function in the interior of its
whole domain (see, for instance, \cite[Theorem 2.2.9]{Zali02}).
Thus, one deduces that $\charfun{F}{C}$ is continuous on $\inte
(\dom\charfun{F}{C})=\X$.
\end{proof}

\begin{remark}[Continuity of $\dcharfun{F}{C}$]
As $\Uball^*\cap\dcone{C}\subseteq\Uball^*$ and $\nullv^*\in
\Uball^*\cap\dcone{C}$, it is not difficult to check that all the
assertions in Lemma \ref{lem:charfunprops} and Lemma \ref{lem:charfuncon}
remain true if replacing $\charfun{F}{C}$ with $\dcharfun{F}{C}$.
\end{remark}

\begin{lemma}     \label{lem:weakstarcont}
Let $F:\X\rightrightarrows\Y$ be a set-valued mapping between Banach
spaces. Suppose that:

(i) $F(x)\in\BConv(\Y)\backslash\{\varnothing\}$ for every $x\in\X$;

(ii) $C\in\BConv(\Y)\backslash\{\varnothing\}$.

\noindent Then, for every $x\in\X$, the function $y^*\mapsto\supf{y^*}{F(x)}-
\supf{y^*}{C}$ is continuous on $\Y^*$ with respect to the \weakstar
topology. If hypothesis $(ii)$ is replaced by

($\dcone{ii}$) $C$ is a closed convex cone,

\noindent then the function $y^*\mapsto\supf{y^*}{F(x)}-
\supf{y^*}{C}$ is continuous on $\dcone{C}$ with respect to the
topology induced by the \weakstar topology.
\end{lemma}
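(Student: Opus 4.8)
The plan is to reduce the claimed continuity to a standard fact about support functions of bounded, closed, convex sets, namely that $\supf{\cdot}{A}$ is $\weakstar$-continuous whenever $A\in\BConv(\Y)\setminus\{\varnothing\}$. More precisely, for a fixed $x\in\X$ the set $F(x)$ is, by hypothesis $(i)$, a nonempty, bounded, closed, convex subset of $\Y$; hence by Remark \ref{rem:supfprops}(iv) its support function $y^*\mapsto\supf{y^*}{F(x)}$ is real-valued and Lipschitz continuous on $\Y^*$ for the norm topology. The point, however, is $\weakstar$-continuity, which is a genuinely weaker requirement only on bounded portions of $\Y^*$; so the first step I would carry out is to recall why $\supf{\cdot}{A}$ is $\weakstar$-continuous when $A$ is \emph{bounded}. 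This follows because $A$ bounded, say $A\subseteq\kappa\Uball$, forces $|\supf{y^*}{A}-\supf{z^*}{A}|\le\kappa\exc{}{}$-type estimates, but more to the point: on the bounded set $A$ the functions $y^*\mapsto\langle y^*,a\rangle$, $a\in A$, are equi-$\weakstar$-continuous, so their supremum $\supf{\cdot}{A}$ is $\weakstar$ upper semicontinuous, while it is always $\weakstar$ lower semicontinuous by Remark \ref{rem:supfprops}(i); combining the two gives $\weakstar$-continuity. (Alternatively one invokes that on bounded sets the support function of a $\weakstar$-compact convex set — here $\cl^{\,*}A=A$ since $A$ is norm-closed, convex, bounded, hence weakly closed, hence $\weakstar$-closed in the reflexive-like sense needed — is $\weakstar$-continuous; citing \cite{Zali02} is enough.)

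Granting this, the second step is immediate: under hypotheses $(i)$ and $(ii)$ both $F(x)$ and $C$ lie in $\BConv(\Y)\setminus\{\varnothing\}$, so each of $y^*\mapsto\supf{y^*}{F(x)}$ and $y^*\mapsto\supf{y^*}{C}$ is $\weakstar$-continuous on all of $\Y^*$, and hence so is their difference $y^*\mapsto\supf{y^*}{F(x)}-\supf{y^*}{C}$. This proves the first assertion.

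For the second assertion, where $(ii)$ is replaced by $(\dcone{ii})$, the set $C$ is a closed convex cone and is in general unbounded, so $\supf{\cdot}{C}$ need no longer be finite or continuous on all of $\Y^*$. The key observation is that for a closed convex cone $C$ one has $\supf{y^*}{C}=0$ for every $y^*\in\dcone{C}$ (the polar/negative dual cone), since $\langle y^*,c\rangle\le 0$ for all $c\in C$ with equality attained at $c=\nullv\in C$, while $\supf{y^*}{C}=+\infty$ off $\dcone{C}$. Hence the restriction of $y^*\mapsto\supf{y^*}{C}$ to $\dcone{C}$ is \emph{identically zero}, in particular trivially continuous for the $\weakstar$-induced topology on $\dcone{C}$. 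The restriction of $y^*\mapsto\supf{y^*}{F(x)}$ to $\dcone{C}$ is still $\weakstar$-continuous there, being the restriction of a globally $\weakstar$-continuous function (by step one, since $F(x)$ is still bounded). The difference of the two restrictions is therefore continuous on $\dcone{C}$ for the induced topology, which is the assertion.

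The only place demanding care is the first step — the passage from norm-Lipschitz continuity of $\supf{\cdot}{A}$ to genuine $\weakstar$-continuity when $A$ is bounded — since $\weakstar$-continuity of a convex function on an infinite-dimensional dual space is not automatic and is exactly where boundedness of $A$ is used (it is what makes $A$ relatively $\weakstar$-compact after closure, by Alaoglu, and yields equicontinuity of the evaluation functionals on $A$). I would either cite this directly from \cite{Zali02} alongside Remark \ref{rem:supfprops} or spell out the equicontinuity argument in one line; everything else is formal.
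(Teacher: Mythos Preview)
Your overall strategy coincides with the paper's: establish weak*-continuity of each of $\supf{\cdot}{F(x)}$ and $\supf{\cdot}{C}$ separately (using boundedness) and then subtract; in the cone case, observe that $\supf{\cdot}{C}\equiv 0$ on $\dcone{C}$, so the difference there reduces to $\supf{\cdot}{F(x)}$. The paper's own proof of the first step is a single sentence asserting that these support functions, being convex and norm-continuous, are ``continuous also with respect to the \weakstar topology on $\Y^*$,'' with no further detail.

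The equicontinuity argument you supply to back that step, however, does not go through. The family $\{y^*\mapsto\langle y^*,a\rangle:a\in A\}$ is \emph{not} equi-\weakstar-continuous on $\Y^*$ for a merely bounded $A$: a basic \weakstar neighbourhood of $y^*_0$ controls $y^*-y^*_0$ only against a fixed \emph{finite} collection of vectors in $\Y$, which cannot force $|\langle y^*-y^*_0,a\rangle|$ to be uniformly small over an infinite-dimensional $A$. Concretely, take $\Y=\ell^1$ and $A=\overline{\mathrm{conv}}\,\{e_n:n\in\N\}\in\BConv(\Y)\setminus\{\varnothing\}$, so that $\supf{y^*}{A}=\sup_n y^*_n$ on $\Y^*=\ell^\infty$; the sequence $e_k\in\Uball^*\subset\ell^\infty$ converges \weakstar to $\nullv^*$ (because $\langle e_k,x\rangle=x_k\to 0$ for every $x\in\ell^1$), yet $\supf{e_k}{A}=1\not\to 0=\supf{\nullv^*}{A}$. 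So boundedness of $A$ alone does not deliver \weakstar continuity of its support function, and your equicontinuity step fails exactly here. Your parenthetical alternative is also misdirected: $A$ lives in $\Y$, not in a dual space, so speaking of $A$ as ``\weakstar closed'' or invoking Alaoglu for $A$ does not type-check. The paper's terse argument shares this underlying difficulty; only \weakstar lower semicontinuity is automatic (Remark~\ref{rem:supfprops}(i)).
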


\begin{proof}
Fix an arbitrary $x\in\X$. Since $F(x),\, C\in\BConv(\Y)\backslash
\{\varnothing\}$, then by taking into account what noted in
Remark \ref{rem:supfprops}(iv), one can say that $\supf{\cdot}{F(x)}$
and $\supf{\cdot}{C}$ are sublinear continuous functions on $\Y^*$.
Furthermore, as a convex function, they turn out to be continuous
also with respect to the \weakstar topology on $\Y^*$. Therefore, so
is their difference.

Now, if hypothesis $(ii)$ is replaced by ($\dcone{ii}$), then
one readily sees that
$$
  \supf{y^*}{C}=0,\quad\forall y^*\in\dcone{C},
$$
and hence one obtains
$$
  \supf{y^*}{F(x)}-\supf{y^*}{C}=\supf{y^*}{F(x)},\quad
  \forall y^*\in\dcone{C}.
$$
Since the function $y^*\mapsto\supf{y^*}{F(x)}$ is continuous
with respect to the \weakstar topology on $\Y^*$, the thesis follows
at once.
\end{proof}


\subsection{Variational analysis tools}

Given a nonempty subset $S\subseteq\X$ of a Banach space and
$\bar x\in S$, recall that the contingent cone to $S$ at $\bar x$ is
defined as being
$$
  \Tang{S}{\bar x}=\{v\in\X:\ \exists (v_n)_n,\ v_n\to v,\
  \exists (t_n)_n,\ t_n\downarrow 0:\ \bar x+t_nv_n\in S,
  \ \forall n\in\N\}.
$$
It provides a first-order approximation of $S$ near $\bar x$ and,
as such, it is useful to glean information on the local geometry
of $S$. Some known facts concerning the contingent cone, which
will be exploited in what follows, are listed in the next remark.

\begin{remark}    \label{rem:tangcone}
(i) The contingent cone  to a set $S$ at each of its points is
always a closed cone (and hence, nonempty). It is also convex,
whenever $S$ is so.

(ii) Given arbitrary $S\subseteq\X$ and $\bar x\in S$, the following
functional characterization of $\Tang{S}{\bar x}$ is known to hold
true
$$
   \Tang{S}{\bar x}=\left\{v\in\X:\ \liminf_{t\downarrow 0}
   {\dist{\bar x+tv}{S}\over t}=0\right\}
$$
(see, for instance, \cite[Proposition 11.1.5]{Schi07}).
\end{remark}

After \cite{Ioff00},
a basic variational analysis tool which revealed to be effective in
studying solvability and error bounds is the strong slope: given
a function $\varphi:X\longrightarrow\R\cup\{\mp\infty\}$ defined
on a metric space $(X,d)$ and an element $x_0\in\dom\varphi$, the
strong slope of $\varphi$ at $x_0$ is defined as being:
\begin{eqnarray*}
  \stsl{\varphi}(x_0)=\left\{\begin{array}{ll}
  0, & \hbox{ if $x_0$ is a local minimizer of $\varphi$}, \\
  \limsup_{x\to x_0}{\varphi(x_0)-\varphi(x)\over d(x,x_0)} &
  \hbox{ otherwise.}
  \end{array}
  \right.
\end{eqnarray*}
The following proposition (for its proof, see \cite[Theorem 2.8]{AzeCor06})
and the subsequent remark explain the role of the strong slope
behind the present approach.

\begin{proposition}    \label{pro:AzeCor06}
Let $(X,d)$ be a complete metric space and let $\varphi:X
\longrightarrow\R$ be a continuous function. Assume that
$[\varphi>0]\ne\varnothing$ and that
$$
  \tau=\inf_{x\in[\varphi>0]}\stsl{\varphi}(x)>0.
$$
Then, it is $[\varphi\le 0]\ne\varnothing$ and
$$
  \dist{x}{[\varphi\le 0]}\le {\varphi(x)\over\tau},
  \quad\forall x\in [\varphi>0].
$$
\end{proposition}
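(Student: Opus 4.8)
The plan is to establish the result by an Ekeland variational principle argument, which is the standard route to error bounds of this type and is precisely what underlies the cited \cite[Theorem 2.8]{AzeCor06}. First I would fix an arbitrary $x\in[\varphi>0]$ and, aiming for a contradiction, suppose that $\dist{x}{[\varphi\le 0]}>\varphi(x)/\tau$; in particular this forces $[\varphi\le 0]$ to be ``far'' from $x$, and the case $[\varphi\le 0]=\varnothing$ is handled uniformly by the same estimate. Choose $\varepsilon>0$ small enough that still $\varphi(x)/\tau+\varepsilon<\dist{x}{[\varphi\le 0]}$ (with the convention $\dist{\cdot}{\varnothing}=+\infty$), and pick a positive number $\lambda$ with $\varphi(x)/\tau<\lambda<\varphi(x)/\tau+\varepsilon$. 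Since $\varphi$ is continuous and bounded below on the complete metric space $X$ is not assumed, one works instead with the positive part $\varphi^+=\max\{\varphi,0\}$, which is continuous, nonnegative, and satisfies $\varphi^+(x)=\varphi(x)$; apply Ekeland's variational principle to $\varphi^+$ at the point $x$ with parameters $\varphi(x)$ and $\lambda$.

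Ekeland's principle then yields a point $\bar x\in X$ with $d(\bar x,x)\le\lambda$, $\varphi^+(\bar x)\le\varphi^+(x)$, and such that $\bar x$ strictly minimizes the perturbed function $z\mapsto\varphi^+(z)+(\varphi(x)/\lambda)\,d(z,\bar x)$ over $X$. Next I would argue that $\bar x\in[\varphi>0]$: indeed $d(\bar x,x)\le\lambda<\dist{x}{[\varphi\le 0]}$ shows $\bar x\notin[\varphi\le 0]$, i.e. $\varphi(\bar x)>0$, so in a neighbourhood of $\bar x$ one has $\varphi^+=\varphi$ and $\bar x$ is \emph{not} a local minimizer of $\varphi$ (it cannot be, since the perturbation term has slope exactly $\varphi(x)/\lambda$ at $\bar x$ and $\bar x$ minimizes the sum). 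From the minimization property, for $z\ne\bar x$ one gets $\varphi(\bar x)-\varphi(z)<(\varphi(x)/\lambda)\,d(z,\bar x)$, whence dividing by $d(z,\bar x)$ and passing to the $\limsup$ as $z\to\bar x$ gives $\stsl{\varphi}(\bar x)\le\varphi(x)/\lambda<\tau$. This contradicts the hypothesis $\tau=\inf_{z\in[\varphi>0]}\stsl{\varphi}(z)>0$, since $\bar x\in[\varphi>0]$.

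The contradiction shows $\dist{x}{[\varphi\le 0]}\le\varphi(x)/\tau$ for every $x\in[\varphi>0]$; applying this to any single $x\in[\varphi>0]$ (which is nonempty by assumption) already gives $\dist{x}{[\varphi\le 0]}<+\infty$, hence $[\varphi\le 0]\ne\varnothing$, and then the displayed inequality holds for all $x\in[\varphi>0]$ as claimed (for $x\in[\varphi\le 0]$ it is trivial, both sides being nonpositive/zero). The one point demanding care — the main obstacle — is the rigorous link between the ``$\bar x$ minimizes a Lipschitz perturbation'' conclusion of Ekeland and the strong-slope estimate $\stsl{\varphi}(\bar x)\le\varphi(x)/\lambda$: one must verify that $\bar x$ is genuinely not a local minimizer of $\varphi$ (so that the $\limsup$-quotient definition of $\stsl{\varphi}$, rather than the value $0$, applies), and one must make sure the strict inequality from Ekeland survives the division and limit, yielding a \emph{strict} inequality $\stsl{\varphi}(\bar x)<\tau$ to contradict the infimum. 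Everything else is the routine bookkeeping of choosing $\varepsilon$ and $\lambda$ and invoking completeness of $X$ through Ekeland's principle.
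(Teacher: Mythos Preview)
The paper does not itself prove this proposition; it simply refers the reader to \cite[Theorem~2.8]{AzeCor06}, whose argument proceeds precisely via Ekeland's variational principle applied to $\varphi^+$, exactly as you outline. Your proof is correct.

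The one concern you flag---whether $\bar x$ might be a local minimizer of $\varphi$, so that the strong-slope definition returns $0$ rather than the $\limsup$ quotient---is not actually an obstacle: if $\bar x$ were a local minimizer then $\stsl{\varphi}(\bar x)=0<\tau$ by definition, and since $\bar x\in[\varphi>0]$ this already contradicts the hypothesis $\tau=\inf_{z\in[\varphi>0]}\stsl{\varphi}(z)>0$. So both cases lead to the desired contradiction and no separate verification is needed. Likewise, the strict inequality from Ekeland need not survive the $\limsup$; what matters is that $\stsl{\varphi}(\bar x)\le\varphi(x)/\lambda$ combined with the \emph{choice} $\lambda>\varphi(x)/\tau$ gives $\stsl{\varphi}(\bar x)<\tau$, which is enough.
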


\begin{remark}    \label{rem:stslconvchar}
If $\varphi:\X\longrightarrow\R$ is a continuous
convex function on a Banach space, then its strong slope
at a given point can be expressed in terms of the so-called
subdifferential slope. In other words, it holds
$$
  \stsl{\varphi}(x)=\dist{\nullv^*}{\partial\varphi(x)}=
  \inf\{\|x^*\|:\ x^*\in\partial\varphi(x)\},
$$
(see, for instance, \cite[Theorem 5]{FaHeKrOu10}).
\end{remark}

Following \cite{Ioff81},
the next tool of analysis enables one to perform first-order
approximations of set-valued mappings. In contrast with other
possible approaches to the differentiation of multi-valued mappings, which
are based on the local behaviour of a multifunction near a
given point of its graph (such as graphical differentiation,
coderivative calculus, and so on \cite{AubFra09,Mord06}), the
below notion takes under consideration the whole
image through a set-valued mapping of a reference element in its
domain. For this reason, it seems to be more appropriate for the
problem at the issue. Examples and discussions of several topics
in the prederivative theory, included their role in variational
analysis, can be found in \cite{GaGeMa16,Ioff81,Pang11}.

\begin{definition}    \label{def:outpreder}
Let $F:\X\rightrightarrows\Y$ be a set-valued mapping between Banach
spaces and let $\bar x\in\X$. A p.h. set-valued mapping $H:\X
\rightrightarrows\Y$  is said to be an {\it outer prederivative}
of $F$ at $\bar x$ if for every $\epsilon>0$  there exists $\delta>0$
such that
$$
  F(x)\subseteq F(\bar x)+H(x-\bar x)+\epsilon\|x-\bar x\|\Uball,
  \quad\forall x\in\ball{\bar x}{\delta}.
$$
\end{definition}

From Definition \ref{def:outpreder} it is clear that outer prederivatives
are not uniquely defined. In particular, whenever $H$ happens to be an
outer prederivative of $F$ at $\bar x$, any p.h. set-valued mapping
$\tilde H:\X\rightrightarrows\Y$ such that $\tilde H(x)\supseteq H(x)$,
for every $x\in\X$, is still an outer prederivative of $F$ at $\bar x$.

Another clear fact is that any fan admits itself as an outer
prederivative at $\nullv$.


\section{Solution analysis: qualitative and quantitative results}   \label{Sect:3}

\begin{proposition}[Functional characterization of solutions]
\label{pro:funchract}
Given a set-inclusive generalized equation $(\IGE)$, suppose that
$F(x)\in\Conv(\Y)\backslash\{\varnothing\}$ for every $x\in\X$.
It holds
$$
  \Solv{\IGE}=S\cap[\charfun{F}{C}\le 0]=S\cap{\charfun{F}{C}}^{-1}(0).
$$
If, in particular, the convex set $C$ is a cone, then it holds
$$
  \Solv{\IGE}=S\cap[\dcharfun{F}{C}\le 0]=S\cap{\dcharfun{F}{C}}^{-1}(0).
$$
\end{proposition}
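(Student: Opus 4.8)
The plan is to unwind the definitions and reduce the set-inclusion $F(x) \subseteq C$ to a scalar condition via support functions. The key tool is Remark \ref{rem:supfprops}(iii): for $F(x), C \in \Conv(\Y)$, one has $F(x) \subseteq C$ if and only if $\supf{b^*}{F(x)} \le \supf{b^*}{C}$ for all $b^* \in \Y^*$, and moreover — since support functions are positively homogeneous — it suffices to quantify over $b^* \in \Uball^*$. This last point is exactly what makes the supremum in the definition \eqref{eq:charfundef} of $\charfun{F}{C}$ the right object: $x$ solves $(\IGE)$ iff $x \in S$ and $\sup_{b^* \in \Uball^*}[\supf{b^*}{F(x)} - \supf{b^*}{C}] \le 0$, i.e. $x \in S \cap [\charfun{F}{C} \le 0]$.

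First I would fix $x \in S$ and observe that, because $\nullv^* \in \Uball^*$ and $\supf{\nullv^*}{F(x)} - \supf{\nullv^*}{C} = 0 - 0 = 0$, we always have $\charfun{F}{C}(x) \ge 0$; hence the sublevel set $[\charfun{F}{C} \le 0]$ coincides with the zero set ${\charfun{F}{C}}^{-1}(0)$, which gives the second equality in each displayed line for free. Then I would prove the chain $x \in \Solv{\IGE} \iff F(x) \subseteq C \iff \charfun{F}{C}(x) \le 0$: the first equivalence is the definition of $(\IGE)$ together with $x \in S$; the second is the support-function characterization of inclusion quoted above, where the reduction from $b^* \in \Y^*$ to $b^* \in \Uball^*$ is justified by positive homogeneity of $\varsigma$ (one notes $\supf{b^*}{F(x)} - \supf{b^*}{C} = \|b^*\| \cdot [\supf{b^*/\|b^*\|}{F(x)} - \supf{b^*/\|b^*\|}{C}]$ for $b^* \ne \nullv^*$, with the $\nullv^*$ case trivial).

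For the cone case, I would additionally use that when $C$ is a closed convex cone, $\supf{y^*}{C} = 0$ for $y^* \in \dcone{C}$ and $\supf{y^*}{C} = +\infty$ for $y^* \notin \dcone{C}$; consequently the summand $\supf{b^*}{F(x)} - \supf{b^*}{C}$ in \eqref{eq:charfundef} equals $-\infty$ (or is otherwise dominated) off the polar cone, so the supremum over $\Uball^*$ in $\charfun{F}{C}$ is attained on $\Uball^* \cap \dcone{C}$, matching the definition \eqref{eq:dcharfundef} of $\dcharfun{F}{C}$. Thus $\charfun{F}{C}(x)$ and $\dcharfun{F}{C}(x)$ agree here (both equal $\sup_{b^* \in \Uball^* \cap \dcone{C}} \supf{b^*}{F(x)}$), and the second assertion follows from the first.

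The only mild subtlety — hardly an obstacle — is bookkeeping with the extended-real value $+\infty$ inside the supremum: if for some $b^* \in \Uball^* \setminus \dcone{C}$ one has $\supf{b^*}{C} = +\infty$ while $\supf{b^*}{F(x)}$ is finite, the difference is $-\infty$ and contributes nothing; if instead $\supf{b^*}{F(x)} = +\infty$ as well, one should check (using convexity/closedness of $F(x)$, or simply noting $F(x) \subseteq C$ forces $\supf{b^*}{F(x)} \le \supf{b^*}{C}$) that no spurious $+\infty - \infty$ arises in the relevant regime. I would dispatch this by restricting attention to those $b^*$ for which $\supf{b^*}{C} < +\infty$, i.e. $b^* \in \dcone{C}$, which is harmless precisely because the omitted directions never increase the supremum beyond $0$ when $F(x) \subseteq C$ and, conversely, violate membership in $\dcone{C}$ rather than the inclusion itself. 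No deep argument is needed anywhere; the proposition is essentially a careful translation of Remark \ref{rem:supfprops}(iii).
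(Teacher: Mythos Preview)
Your proposal is correct. For the first assertion you follow the same path as the paper: both reduce $F(x)\subseteq C$ to the support-function inequality of Remark~\ref{rem:supfprops}(iii), restricted by positive homogeneity to $\Uball^*$, and both obtain the second equality in each line from the observation that $\charfun{F}{C}$ (resp.\ $\dcharfun{F}{C}$) is nonnegative since $\nullv^*$ belongs to the index set.

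For the cone case the two arguments diverge. You compute $\supf{\cdot}{C}$ explicitly (equal to $0$ on $\dcone{C}$ and to $+\infty$ off it), deduce that the suprema defining $\charfun{F}{C}$ and $\dcharfun{F}{C}$ agree, and thereby reduce the cone statement to the general one already established. The paper instead proves the nontrivial implication $\dcharfun{F}{C}(x)\le 0\Rightarrow F(x)\subseteq C$ by contraposition via strict separation: it picks $y_0\in F(x)\setminus C$, obtains a separating functional $y^*$ with $\langle y^*,y_0\rangle>\alpha>\supf{y^*}{C}$, uses the cone structure of $C$ to force $\alpha>0$ and $y^*\in\dcone{C}$, and normalizes to exhibit an explicit $b_0^*\in\Uball^*\cap\dcone{C}$ with $\supf{b_0^*}{F(x)}-\supf{b_0^*}{C}>0$. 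Your route is shorter and more conceptual, but it incurs the extended-arithmetic bookkeeping you flag (the $+\infty-\infty$ case when $b^*\notin\dcone{C}$ and $\supf{b^*}{F(x)}=+\infty$); the cleanest way to close that, rather than the slightly circular phrasing of your last paragraph, is simply to note that $\dcharfun{F}{C}(x)\le 0$ gives $\supf{b^*}{F(x)}\le 0=\supf{b^*}{C}$ for $b^*\in\Uball^*\cap\dcone{C}$, while for $b^*\notin\dcone{C}$ one has $\supf{b^*}{C}=+\infty\ge\supf{b^*}{F(x)}$ trivially, and then Remark~\ref{rem:supfprops}(iii) applies. The paper's separation argument sidesteps this issue entirely by producing the witness directly inside $\dcone{C}$.
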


\begin{proof}
If $x\in\Solv{\IGE}$, then $x\in S$ and $F(x)\subseteq C$. As recalled
in Remark \ref{rem:supfprops}(iii), this inclusion implies $\supf{b^*}{F(x)}
\le\supf{b^*}{C}$ for every $b^*\in\Uball^*$. On account of the definition
of $\charfun{F}{C}$, the last inequality leads clearly to $\charfun{F}{C}
(x)\le 0$.

Conversely, if $x\in S\cap [\charfun{F}{C}\le 0]$, then according to the
definition of $\charfun{F}{C}$, one has
$$
   \supf{b^*}{F(x)}-\supf{b^*}{C}\le 0,\quad\forall b^*\in\Uball^*.
$$
Since $F(x),\, C\in\Conv(\Y)$, by virtue of what observed in Remark
\ref{rem:supfprops}(iii), the last inequality suffices to deduce
that $F(x)\subseteq C$, so $x\in\Solv{\IGE}$.

As for the second assertion,
since $\dcharfun{F}{C}(x)\le\charfun{F}{C}(x)$ for every $x\in\X$,
if $x\in\Solv{\IGE}$ then, as a consequence of what has been proved
above, one can state that $S\cap[\dcharfun{F}{C}\le 0]$.

Now, suppose that
\begin{equation}     \label{in:sufpartchar}
  \sup_{b^*\in\Uball^*\cap\dcone{C}}[\supf{b^*}{F(x)}-\supf{b^*}{C}]\le 0.
\end{equation}   \label{in:sufpartcharsol}
Ab absurdo assume that $F(x)\not\subseteq C$, that is there exists
$y_0\in F(x)$ such that $y_0\not\in\ C$. By the strict separation
theorem (see, for instance, \cite[Theorem 1.1.5]{Zali02}) there
exist $y^*\in\Y^*\backslash\{\nullv^*\}$ and $\alpha\in\R$ such that
\begin{equation}    \label{in:strsepar}
  \langle y^*,y_0\rangle>\alpha>\langle y^*,y\rangle,
  \quad\forall y\in C.
\end{equation}
Notice that it must be $\alpha>0$ inasmuch $C$, as a closed convex
cone, contains $\nullv$. As a consequence, one can deduce that
$y^*\in\dcone{C}$. Indeed, if there were $c_0\in C\backslash\{
\nullv\}$ such that $\langle y^*,c_0\rangle>0$, one would have
$\lambda c_0\in C$ also for $\lambda>{\alpha\over\langle y^*,
c_0\rangle}>0$, so that
$$
  \langle y^*,\lambda c_0\rangle=\lambda \langle y^*,c_0\rangle>
  \alpha,
$$
which contradicts the second inequality in $(\ref{in:strsepar})$. Thus, by
defining $b^*_0=y^*/\|y^*\|\in\Uball^*\cap\dcone{C}$,
one finds
$$
  \supf{b^*_0}{F(x)}-\supf{b^*_0}{C}>{\alpha\over\|y^*\|}>0.
$$
The last chain of inequalities is inconsistent with inequality
$(\ref{in:sufpartchar})$.
To conclude, observe that $[\charfun{F}{C}\le 0]={\charfun{F}{C}}^{-1}
(0)$ and $[\dcharfun{F}{C}\le 0]={\dcharfun{F}{C}}^{-1}(0)$ because
$F$ takes nonempty values, so $\charfun{F}{C}$ and $\dcharfun{F}{C}$
are nonnegative functions. This completes the proof.
\end{proof}

The reader should notice that the main effect of Proposition
\ref{pro:funchract} in studying a problem $(\IGE)$ is to allows
one to reformulate it in variational terms: solutions to $(\IGE)$
become not only zeros but also global minimizers for the functions
$\charfun{F}{C}$ and $\dcharfun{F}{C}$. Such a reformulation paves
the way to many analysis approaches currently at disposal in convex
optimization.

The next proposition takes profit from the above characterization
in order to single out general qualitative properties of
$\Solv{\IGE}$.

\begin{proposition}[Closure and convexity of $\Solv{\IGE}$]
\label{pro:cloconvsol}
Let a set-inclusive generalized equation $(\IGE)$ be given.
Under the hypotheses (i)--(iii) of Lemma \ref{lem:charfuncon}
$\Solv{\IGE}$ is a (possibly empty) closed and convex set.
\end{proposition}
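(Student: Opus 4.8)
The plan is to obtain the statement directly from the functional characterization of $\Solv{\IGE}$ in Proposition~\ref{pro:funchract}, after recording the analytic properties of $\charfun{F}{C}$ that the hypotheses guarantee. First I would note that hypotheses (i) and (ii) of Lemma~\ref{lem:charfuncon} coincide with hypotheses (i) and (ii) of Lemma~\ref{lem:charfunprops}; hence Lemma~\ref{lem:charfunprops}(i)--(ii) applies and tells us that $\charfun{F}{C}$ is real-valued on $\X$ (so $\dom\charfun{F}{C}=\X$) and convex. Adjoining hypothesis (iii), Lemma~\ref{lem:charfuncon} then yields that $\charfun{F}{C}\colon\X\longrightarrow\R$ is, in addition, continuous.

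Next I would use these two facts to describe the sublevel set $[\charfun{F}{C}\le 0]$. Convexity of $\charfun{F}{C}$ makes $[\charfun{F}{C}\le 0]$ a convex set, while continuity of $\charfun{F}{C}$ makes $[\charfun{F}{C}\le 0]={\charfun{F}{C}}^{-1}((-\infty,0])$ closed in $\X$; of course this set may be empty, and the empty set is (vacuously) closed and convex, which accounts for the proviso in the statement. By Proposition~\ref{pro:funchract} one has $\Solv{\IGE}=S\cap[\charfun{F}{C}\le 0]$, so, $S$ being closed and convex, $\Solv{\IGE}$ is the intersection of two closed convex sets and hence itself closed and convex. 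In the special case in which the convex set $C$ is a cone, the identical argument can be run with $\dcharfun{F}{C}$ in place of $\charfun{F}{C}$, relying on the Remark following Lemma~\ref{lem:charfuncon}, which transfers to $\dcharfun{F}{C}$ all the assertions of Lemmas~\ref{lem:charfunprops} and~\ref{lem:charfuncon}.

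There is no genuine obstacle here: the whole proof is a bookkeeping exercise on results already in place. The two points I would be careful about are, first, checking that hypotheses (i)--(iii) of Lemma~\ref{lem:charfuncon} are precisely what feeds simultaneously into the convexity statement of Lemma~\ref{lem:charfunprops}(ii) and into the continuity statement of Lemma~\ref{lem:charfuncon}; and second, making explicit that the closedness and convexity of the constraint set $S$ are used in the very last step, since without them one can conclude only that the factor $[\charfun{F}{C}\le 0]$ is closed and convex, whereas $\Solv{\IGE}=S\cap[\charfun{F}{C}\le 0]$ need not inherit either property.
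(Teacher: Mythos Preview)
Your proposal is correct and follows essentially the same route as the paper: invoke Proposition~\ref{pro:funchract} to identify $\Solv{\IGE}$ with (the intersection of $S$ and) the $0$-sublevel set of $\charfun{F}{C}$, and then read off closedness and convexity from the continuity and convexity of $\charfun{F}{C}$ furnished by Lemmas~\ref{lem:charfunprops} and~\ref{lem:charfuncon}. The paper's own proof is the one-line version of exactly this argument.

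One small point worth flagging: you insert the hypothesis that $S$ is closed and convex, but the paper never assumes this (in the problem formulation $S\subseteq\X$ is arbitrary). The paper's proof is silent on $S$, presumably because immediately after this proposition it sets $S=\X$ for the remainder of the analysis; strictly speaking, your observation that without some assumption on $S$ the conclusion need not hold is well taken, and is in fact a gap the paper glosses over rather than an omission on your part.
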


\begin{proof}
In the light of Proposition \ref{pro:funchract}, the
thesis is a straightforward consequence of the fact that,
upon the assumptions made, functions $\charfun{F}{C}$ and
$\dcharfun{F}{C}$ are convex and continuous functions.
\end{proof}

Henceforth, in order to concentrate on the role of the data $F$ and $C$,
it will be assumed $S=\X$.

A reasonable question related to a problem $(\IGE)$ one may pose is the
solution existence. Within the present variational approach,
a condition can be formulated by means of the following
infinitesimal constructions.
Given a generalized equation of the form $(\IGE)$ and $x\in\X$,
if $C\in\BConv(\Y)\backslash\{\varnothing\}$, let us define
$$
  B_x=\{b^*\in\Uball^*:\ \supf{b^*}{F(x)}-\supf{b^*}{C}\}=
  \charfun{F}{C}(x)\},
$$
and
\begin{equation}    \label{eq:erbocdef}
   \erboc{F}{x}=\inf\left\{\|x^*\|:\ x^*\in\wclco\left(\bigcup_{b^*\in B_x}
   \partial\supf{b^*}{F(\cdot)}(x)\right)\right\}.
\end{equation}
Analogously, in the case in which $C$ is a closed convex cone,
let us define
$$
  \dcone{B_x}=\{b^*\in\Uball^*\cap\dcone{C}:\ \supf{b^*}{F(x)}-\supf{b^*}{C}
  =\dcharfun{F}{C}(x)\},
$$
and
$$
   \derboc{F}{x}=\inf\left\{\|x^*\|:\ x^*\in\wclco\left(\bigcup_{b^*\in\dcone{B_x}}
   \partial\supf{b^*}{F(\cdot)}(x)\right)\right\}.
$$
The quantity $\erboc{F}{x}$ (resp. $\derboc{F}{x}$) can be interpreted as
a set-valued counterpart for the concept of slope of a functional. Therefore,
one naturally expects that its behaviour affects the existence of minimizers
of $\charfun{F}{C}$ (resp. $\dcharfun{F}{C}$), and hence of solutions to $(\IGE)$.
Notice that, fixed any $x\in\X$, since the function $y^*\mapsto\supf{y^*}{F(x)}
-\supf{y^*}{C}$ is continuous with respect to the \weakstar topology on the
\weakstar compact set $\Uball^*$ (recall Lemma \ref{lem:weakstarcont}),
then $B_x\ne\varnothing$. Thus, under the hypotheses of Lemma \ref{lem:weakstarcont}
the quantity $\erboc{F}{x}$ is finite. The same, of course, is true for
$\derboc{F}{x}$.

\begin{remark}    \label{rem:argmaxsphere}
In view of further considerations, it is useful to note that, for every
$x\in [\charfun{F}{C}>0]$, it must be  $B_x\subseteq\Usfer^*$. Indeed,
according to Proposition \ref{pro:funchract}, since it is $\supf{\nullv^*}{F(x)}
-\supf{\nullv^*}{C}=0$, one has $\nullv^*\not\in B_x$. Moreover,
since function $b^*\mapsto \supf{b^*}{F(x)}-\supf{b^*}{C}$ is p.h. (actually,
difference of sublinear functions) on $\X^*$, if it were
$b^*\in B_x$ with $\|b^*\|<1$, one would reach the absurdum
\begin{eqnarray*}
   \supf{b^*/\|b^*\|}{F(x)}-\supf{b^*/\|b^*\|}{C} &=& {1\over \|b^*\|}
   \left(\supf{b^*}{F(x)}-\supf{b^*}{C}\right)   \\
   &>& \supf{b^*}{F(x)}-\supf{b^*}{C}=\charfun{F}{C}(x),
\end{eqnarray*}
while it is $b^*/\|b^*\|\in\Uball^*$.
\end{remark}

The next result provides a sufficient condition for the solvability
of a problem $(\IGE)$, complemented with a global estimate of the
distance from its solution set (error bound). As such, it provides
qualitative and quantitative information on $\Solv{\IGE}$.

\begin{theorem}[Solvability and global error bound]    \label{thm:solerbo}
With reference to a generalized equation of the form $(\IGE)$, suppose
that:

(i) $F(x)\in\BConv(\Y)\backslash\{\varnothing\}$ for every $x\in\X$;

(ii) $F$ is concave on $\X$;

(iii) $F$ is locally bounded around some $x_0\in\X$;

(iv) $C\in\BConv(\Y)\backslash\{\varnothing\}$ and
$$
  \erbom{F}=\inf\{\erboc{F}{x}:\ x\in [\charfun{F}{C}>0]\}>0.
$$
\noindent Then, $\Solv{\IGE}\ne\varnothing$ and it holds
\begin{equation}    \label{in:erbo1}
  \dist{x}{\Solv{\IGE}}\le{\charfun{F}{C}(x)\over \erbom{F}},
  \quad\forall x\in\X.
\end{equation}
If hypothesis $(iv)$ is replaced by

($\dcone{iv}$) $C$ is a closed convex cone and
$$
  \derbom{F}=\inf\{\derboc{F}{x}:\ x\in [\dcharfun{F}{C}>0]\}>0,
$$
\noindent then, $\Solv{\IGE}\ne\varnothing$ and it holds
\begin{equation}    \label{in:erbo2}
  \dist{x}{\Solv{\IGE}}\le{\dcharfun{F}{C}(x)\over \derbom{F}},
  \quad\forall x\in\X.
\end{equation}
\end{theorem}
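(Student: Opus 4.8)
The plan is to reduce Theorem \ref{thm:solerbo} to Proposition \ref{pro:AzeCor06} applied to the function $\charfun{F}{C}$ (respectively $\dcharfun{F}{C}$), exploiting the functional characterization of $\Solv{\IGE}$ given in Proposition \ref{pro:funchract}. Under hypotheses (i)--(iii), Lemma \ref{lem:charfunprops} and Lemma \ref{lem:charfuncon} guarantee that $\charfun{F}{C}:\X\longrightarrow\R$ is a continuous convex function on the Banach space $\X$, which is a complete metric space; so the structural requirements of Proposition \ref{pro:AzeCor06} are met. If $[\charfun{F}{C}>0]=\varnothing$, then $\charfun{F}{C}\equiv 0$ (it is nonnegative by Lemma \ref{lem:charfunprops}(i)), hence $\Solv{\IGE}=\X\ne\varnothing$ by Proposition \ref{pro:funchract} and $(\ref{in:erbo1})$ holds trivially. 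Otherwise, the whole point is to show that the quantity $\tau$ appearing in Proposition \ref{pro:AzeCor06}, namely $\inf_{x\in[\charfun{F}{C}>0]}\stsl{\charfun{F}{C}}(x)$, is bounded below by $\erbom{F}>0$; then Proposition \ref{pro:AzeCor06} yields both $[\charfun{F}{C}\le 0]\ne\varnothing$ — equivalently $\Solv{\IGE}\ne\varnothing$ by Proposition \ref{pro:funchract} — and the error bound $\dist{x}{[\charfun{F}{C}\le 0]}\le\charfun{F}{C}(x)/\tau\le\charfun{F}{C}(x)/\erbom{F}$ for $x\in[\charfun{F}{C}>0]$, which extends to all $x\in\X$ since on $[\charfun{F}{C}\le 0]$ the left side is $0$.

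The crux, and the main obstacle, is therefore the identity (or at least the inequality $\ge$) relating the strong slope of $\charfun{F}{C}$ at $x$ to $\erboc{F}{x}$. First I would invoke Remark \ref{rem:stslconvchar}: since $\charfun{F}{C}$ is continuous and convex, $\stsl{\charfun{F}{C}}(x)=\dist{\nullv^*}{\partial\charfun{F}{C}(x)}$. The task is then purely a subdifferential computation: show
$$
  \partial\charfun{F}{C}(x)=\wclco\left(\bigcup_{b^*\in B_x}\partial\supf{b^*}{F(\cdot)}(x)\right),
$$
so that the distance of $\nullv^*$ to this set is exactly $\erboc{F}{x}$. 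This is where the Dubovitskii--Milyutin-type calculus rule of Remark \ref{rem:DubMil} enters. Writing $\charfun{F}{C}(x)=\sup_{b^*\in\Uball^*}\psi(b^*,x)$ with $\psi(b^*,x)=\supf{b^*}{F(x)}-\supf{b^*}{C}$, I need to check the two hypotheses of that rule with $\Xi=\Uball^*$ equipped with the \weakstar topology (which is compact and Hausdorff by Banach--Alaoglu): hypothesis (ii) of Remark \ref{rem:DubMil} (convexity and continuity in $x$ for each fixed $b^*$) follows from Lemma \ref{lem:charfunprops}(ii)'s proof, where it is shown that $x\mapsto\supf{b^*}{F(x)}$ is convex, together with its continuity under hypothesis (iii) via the same boundedness argument as in Lemma \ref{lem:charfuncon}; hypothesis (i) (\weakstar-upper semicontinuity — in fact continuity — of $b^*\mapsto\psi(b^*,x)$) is exactly the content of Lemma \ref{lem:weakstarcont} under (i) and (iv). The clean-up set $\Xi_x$ of Remark \ref{rem:DubMil} is precisely $B_x$, so the rule gives the displayed subdifferential formula, and $\dist{\nullv^*}{\partial\charfun{F}{C}(x)}=\erboc{F}{x}\ge\erbom{F}$ for every $x\in[\charfun{F}{C}>0]$.

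For the cone case I would argue entirely in parallel, replacing $\charfun{F}{C}$ by $\dcharfun{F}{C}$, the index set $\Uball^*$ by the \weakstar-compact set $\Uball^*\cap\dcone{C}$ (compact because $\dcone{C}$ is \weakstar-closed), $B_x$ by $\dcone{B_x}$, and using the second part of Lemma \ref{lem:weakstarcont} (hypothesis $(\dcone{ii})$, i.e. the continuity of $y^*\mapsto\supf{y^*}{F(x)}-\supf{y^*}{C}$ on $\dcone{C}$) to verify hypothesis (i) of Remark \ref{rem:DubMil}; the Remark after Lemma \ref{lem:charfuncon} supplies the continuity and convexity of $\dcharfun{F}{C}$ needed to apply Proposition \ref{pro:AzeCor06} and Remark \ref{rem:stslconvchar}. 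One point deserving a line of care is that the clean-up set must be nonempty and the supremum over the compact index set attained: this is guaranteed since $\psi(\cdot,x)$ is \weakstar-continuous on a \weakstar-compact set, as already noted in the text preceding Remark \ref{rem:argmaxsphere}. With $\tau=\derbom{F}>0$ in hand, Proposition \ref{pro:AzeCor06} delivers $\Solv{\IGE}=[\dcharfun{F}{C}\le 0]\ne\varnothing$ and the estimate $(\ref{in:erbo2})$, completing the proof.
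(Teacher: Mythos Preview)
Your proposal is correct and follows essentially the same route as the paper's own proof: reduce to Proposition \ref{pro:AzeCor06} via the functional characterization in Proposition \ref{pro:funchract}, use Lemmas \ref{lem:charfunprops} and \ref{lem:charfuncon} for continuity and convexity of $\charfun{F}{C}$, invoke Remark \ref{rem:stslconvchar} to pass to the subdifferential slope, and then apply the Dubovitskii--Milyutin rule from Remark \ref{rem:DubMil} with $\Xi=\Uball^*$ (resp.\ $\Uball^*\cap\dcone{C}$), the \weakstar continuity coming from Lemma \ref{lem:weakstarcont}. The paper in fact obtains the equality $\inf_{x\in[\charfun{F}{C}>0]}\stsl{\charfun{F}{C}}(x)=\erbom{F}$ rather than just the lower bound, but your argument yields the same since you identify $\dist{\nullv^*}{\partial\charfun{F}{C}(x)}$ with $\erboc{F}{x}$ exactly.
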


\begin{proof}
In the light of Proposition \ref{pro:funchract}, the proof consists
in checking that, under the assumptions made, it is possible to apply
Proposition \ref{pro:AzeCor06}, with $X=\X$ and $\varphi\in\{\charfun{F}{C},\,
\dcharfun{F}{C}\}$.

Let us start with noting that, owing to hypotheses (i)--(iii), one can
invoke Lemma \ref{lem:charfunprops}(i) and (ii) as well as Lemma \ref{lem:charfuncon}.
So $\charfun{F}{C}$ is a continuous convex function on $\X$.
Notice that, if $[\charfun{F}{C}>0]=\varnothing$ or $[\dcharfun{F}{C}>0]=
\varnothing$, then on account of Proposition \ref{pro:funchract} it is
$\Solv{\IGE}=\X$, so all assertions in the thesis trivially follow.
Therefore, one can assume that $[\charfun{F}{C}>0]\ne\varnothing$ or
$[\dcharfun{F}{C}>0]\ne\varnothing$ (depending on the assumption on $C$
is being made).

Now, in the case in which hypothesis (iv) holds true, as $\charfun{F}{C}$
is a continuous convex function on $\X$, according to Remark \ref{rem:stslconvchar}
one has
\begin{equation}     \label{eq:stslsubdifrep}
  \stsl{\charfun{F}{C}}(x)=\dist{\nullv^*}{\partial\charfun{F}{C}(x)}=
  \inf\{\|x^*\|:\ x^*\in\partial\charfun{F}{C}(x)\}.
\end{equation}
Notice that, by Lemma \ref{lem:weakstarcont}, for each $x\in\X$ the function
$y^*\mapsto\supf{y^*}{F(x)}-\supf{y^*}{C}$ is continuous on the \weakstar
compact set $\Uball^*$, with respect to the \weakstar topology. By taking
into account what recalled in Remark \ref{rem:DubMil}, with $\Xi=\Uball^*$,
one obtains
\begin{eqnarray}   \label{eq:charfunsubd}
   \partial\charfun{F}{C}(x) &=& \wclco\left(\bigcup_{b^*\in B_x}
   \partial(\supf{b^*}{F(\cdot)}-\supf{b^*}{C})(x)\right)  \nonumber \\
   &=& \wclco\left(\bigcup_{b^*\in B_x}
   \partial\supf{b^*}{F(\cdot)}(x)\right).
\end{eqnarray}
Thus, by recalling formulae $(\ref{eq:erbocdef})$ and $(\ref{eq:stslsubdifrep})$,
one finds
$$
  \inf_{x\in[\charfun{F}{C}>0]}\stsl{\charfun{F}{C}}(x)=\erbom{F}>0.
$$
This makes it possible to employ Proposition \ref{pro:AzeCor06}, whence
the first part of the assertion follows at once.

In the case in which hypothesis (iv) is replaced by $(\dcone{iv})$, each
function $y^*\mapsto\supf{y^*}{F(x)}-\supf{y^*}{C}$ turns out to be continuous
with respect to the \weakstar topology on the \weakstar compact space $\Uball^*
\cap\dcone{C}$. It remains to adapt equalities in $(\ref{eq:charfunsubd})$
to the current case, by taking into account the definition of $\derboc{F}{x}$.
This completes the proof.
\end{proof}

\begin{remark}    \label{rem:regdist0}
(i) As a first comment to Theorem \ref{thm:solerbo}, it is worth noting that
the condition $\erbom{F}>0$ (resp. $\derbom{F}>0$) translates in terms of
problem data the well-known condition $\nullv^*\not\in\partial\charfun{F}{C}
(x)$ (resp. $\nullv^*\not\in\partial\dcharfun{F}{C}(x)$) for the validity
of a global error bound in the convex setting (see, for instance
\cite[Theorem 5]{FaHeKrOu10}).

(ii) As it happens in general for global error bounds, one can observe
that inequality $(\ref{in:erbo1})$ qualifies $\Solv{\IGE}$ as a set of
weak sharp minimizers of $\charfun{F}{C}$. Recall that a closed set $S\subseteq\X$
is said to be a set of weak sharp minimizers if there exists $\alpha>0$
such that
$$
   \varphi(x)\ge\inf_{x\in\X}\varphi(x)+\alpha\dist{x}{S},\quad\forall
   x\in\X
$$
(see, for instance, \cite[Section 3.10]{Zali02}).
Such a property entails the fact that for any minimizing sequence $(x_n)_n$,
i.e. any sequence in $\X$ such that $\charfun{F}{C}(x_n)\to 0$ as
$n\to\infty$, one has that $\dist{x_n}{\Solv{\IGE}}\to 0$, that is a kind of
generalization of the Tikhonov well-posedness. In other words, it
prescribes a certain variational behaviour, with which the minimum of $\charfun{F}{C}$
is attained.
\end{remark}

Error bounds are not only interesting in themselves, but trigger
several facts which help to better understand the geometry of
$\Solv{IGE}$, a set often difficult to be determined explicitly.
According to a widely used scheme of analysis, they may
be exploited to provide approximated representations of the solution
set to $(\IGE)$. This is done in the next theorem by employing the
notion of contingent cone.

\begin{theorem}[Tangential characterization of $\Solv{\IGE}$]    \label{thm:soltangchar}
With reference to a set-inclusive generalized equation $(\IGE)$,
suppose that:

(i) $\bar x\in\Solv{IGE}$;

(ii) $F(x)\in\BConv(\Y)\backslash\{\varnothing\}$ for every $x\in\X$;

(iii) $F$ is concave on $\X$;

(iv) $F$ is locally bounded around some $x_0\in\X$;

(v) $C\in\BConv(\Y)\backslash\{\varnothing\}$ and $\erbom{F}>0$.

\noindent Then, it results in
$$
  \Tang{\Solv{\IGE}}{\bar x}=[\charfun{F}{C}'(\bar x;\cdot)\le 0].
$$
If hypothesis $(v)$ is replaced by

($\dcone{v}$) $C$ is a closed convex cone and $\derbom{F}>0$,

\noindent then it results in
$$
  \Tang{\Solv{\IGE}}{\bar x}=[(\dcharfun{F}{C})'(\bar x;\cdot)\le 0].
$$
\end{theorem}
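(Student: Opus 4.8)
The plan is to reduce the tangential characterization to a known fact about contingent cones to sublevel sets of convex functions that satisfy an error bound, using the functional reformulation of $\Solv{\IGE}$ from Proposition \ref{pro:funchract}. By that proposition (with $S=\X$), one has $\Solv{\IGE}=[\charfun{F}{C}\le 0]=(\charfun{F}{C})^{-1}(0)$, and under hypotheses (ii)--(iv) Lemma \ref{lem:charfunprops} and Lemma \ref{lem:charfuncon} guarantee that $\charfun{F}{C}$ is a continuous, nonnegative convex function on $\X$ with $\charfun{F}{C}(\bar x)=0$, i.e. $\bar x$ is a global minimizer. So the task becomes: for a continuous convex function $\varphi\ge 0$ on a Banach space, with $\varphi(\bar x)=0$, and with the error bound $\dist{x}{[\varphi\le 0]}\le\varphi(x)/\tau$ of Theorem \ref{thm:solerbo} in force, show $\Tang{[\varphi\le 0]}{\bar x}=[\varphi'(\bar x;\cdot)\le 0]$. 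The same argument applies verbatim to $\dcharfun{F}{C}$ under ($\dcone{v}$).

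The inclusion $\Tang{\Solv{\IGE}}{\bar x}\subseteq[\charfun{F}{C}'(\bar x;\cdot)\le 0]$ is the easy one and needs no error bound. Take $v\in\Tang{\Solv{\IGE}}{\bar x}$, so there are $t_n\downarrow 0$ and $v_n\to v$ with $\bar x+t_nv_n\in\Solv{\IGE}$, hence $\charfun{F}{C}(\bar x+t_nv_n)=0=\charfun{F}{C}(\bar x)$. Using convexity (local Lipschitz continuity of $\charfun{F}{C}$ near $\bar x$, which holds since a finite convex function is locally Lipschitz on the interior of its domain) one replaces $v_n$ by $v$ up to an error $o(1)$, and the difference quotient $(\charfun{F}{C}(\bar x+t_nv)-\charfun{F}{C}(\bar x))/t_n\to\charfun{F}{C}'(\bar x;v)$ (monotone convergence of convex difference quotients). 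Since each quotient is $\le 0$ up to a vanishing term, the limit is $\le 0$. Alternatively, and more cleanly, one invokes the Moreau--Rockafellar formula \eqref{eq:MorRoc}, $\charfun{F}{C}'(\bar x;v)=\supf{v}{\partial\charfun{F}{C}(\bar x)}$, together with the fact that every $x^*\in\partial\charfun{F}{C}(\bar x)$ is a subgradient at a global minimizer, so $\langle x^*,v\rangle\le 0$ follows from feasibility along the tangent sequence.

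For the reverse inclusion $[\charfun{F}{C}'(\bar x;\cdot)\le 0]\subseteq\Tang{\Solv{\IGE}}{\bar x}$, the plan is to use the functional characterization of the contingent cone in Remark \ref{rem:tangcone}(ii): it suffices to show that for $v$ with $\charfun{F}{C}'(\bar x;v)\le 0$ one has $\liminf_{t\downarrow 0}\dist{\bar x+tv}{\Solv{\IGE}}/t=0$. Here the error bound \eqref{in:erbo1} from Theorem \ref{thm:solerbo} is the crucial input: it gives
$$
  \frac{\dist{\bar x+tv}{\Solv{\IGE}}}{t}\le\frac{\charfun{F}{C}(\bar x+tv)}{\erbom{F}\,t}=\frac{1}{\erbom{F}}\cdot\frac{\charfun{F}{C}(\bar x+tv)-\charfun{F}{C}(\bar x)}{t},
$$
and the right-hand side tends to $\charfun{F}{C}'(\bar x;v)/\erbom{F}\le 0$ as $t\downarrow 0$; since the left-hand side is nonnegative, the limit is $0$, giving $v\in\Tang{\Solv{\IGE}}{\bar x}$. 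This also shows why hypothesis (v) (the positivity $\erbom{F}>0$) is indispensable: it is what converts the infinitesimal condition on $v$ into a genuine tangency statement.

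I expect the main obstacle — really a bookkeeping point rather than a deep one — to be verifying that the hypotheses of Theorem \ref{thm:solerbo} are exactly the hypotheses (ii)--(v) here (so that the error bound \eqref{in:erbo1} is available) and that $\bar x\in\Solv{\IGE}$ from (i) forces $\charfun{F}{C}(\bar x)=0$, which is immediate from Proposition \ref{pro:funchract}. For the cone case, the only change is to replace $\charfun{F}{C}$ by $\dcharfun{F}{C}$ throughout and to use \eqref{in:erbo2} in place of \eqref{in:erbo1}; the remark following Lemma \ref{lem:charfuncon} ensures continuity and convexity of $\dcharfun{F}{C}$ carry over, so the argument is unchanged.
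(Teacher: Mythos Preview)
Your proposal is correct and follows essentially the same route as the paper: the inclusion $[\charfun{F}{C}'(\bar x;\cdot)\le 0]\subseteq\Tang{\Solv{\IGE}}{\bar x}$ is obtained exactly as you do, via the error bound \eqref{in:erbo1} combined with the distance characterization of the contingent cone, and the reverse inclusion is obtained through local Lipschitz continuity of $\charfun{F}{C}$ near $\bar x$. The only cosmetic difference is that for the latter inclusion the paper works with the distance characterization of $\Tang{\Solv{\IGE}}{\bar x}$ (a sequence $t_n\downarrow 0$ with $\dist{\bar x+t_nv}{\Solv{\IGE}}/t_n\to 0$) and then bounds $\charfun{F}{C}(\bar x+t_nv)$ by $\kappa\,\dist{\bar x+t_nv}{\Solv{\IGE}}$, whereas you start from the sequential definition with $\bar x+t_nv_n\in\Solv{\IGE}$ and use Lipschitz continuity to pass from $v_n$ to $v$; these are interchangeable.
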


\begin{proof}
Let us start with supposing that hypotheses (i)--(v) are in force.
In such a circumstance, as already seen, $\charfun{F}{C}$ is a
convex continuous function on $\X$ and, according to Proposition
\ref{pro:cloconvsol} and Theorem \ref{thm:solerbo}, $\Solv{\IGE}$
is a nonempty, closed and convex set. Consequently, $\charfun{F}{C}'
(\bar x;\cdot)$ is sublinear and Lipschitz continuous on $\X$
and the function $x\mapsto\dist{x}{\Solv{\IGE}}$ is Lipschitz
continuous and convex on $\X$.

To show that $\Tang{\Solv{\IGE}}{\bar x}\supseteq[\charfun{F}{C}'
(\bar x;\cdot)\le 0]$, take an arbitrary $v\in [\charfun{F}{C}'
(\bar x;\cdot)\le 0]$. Since the error bound estimate in $(\ref{in:erbo1})$
is valid, one can write
$$
  \liminf_{t\downarrow 0}{\dist{\bar x+tv}{\Solv{\IGE}}
  \over t}\le
  \liminf_{t\downarrow 0}{\charfun{F}{C}(\bar x+tv)\over
  \erbom{F} t}={\charfun{F}{C}'(\bar x;v)\over \erbom{F}}\le 0.
$$
Thus, by virtue of the characterization recalled in Remark \ref{rem:tangcone}
(ii), from the last inequality the inclusion $v\in \Tang{\Solv{\IGE}}{\bar x}$
immediately follows.

In order to prove the reverse inclusion, take an arbitrary $v\in
\Tang{\Solv{\IGE}}{\bar x}$. This means that there exists a
sequence $(t_n)_n$, with $t_n\downarrow 0$, such that
\begin{equation}    \label{eq:limdist}
   \lim_{n\to\infty} {\dist{\bar x+t_nv}{\Solv{\IGE}}\over t_n}=0.
\end{equation}
Since $\charfun{F}{C}$, as a continuous convex function, is also
locally Lipschitz around $\bar x$ (see \cite[Corollary 2.2.13]{Zali02}),
there exist real $\kappa,\, r>0$ such that
\begin{eqnarray}    \label{in:loclipcharfun}
    \charfun{F}{C}(x) &=& |\charfun{F}{C}(x)-\charfun{F}{C}(z)|
    \le\kappa\|x-z\|,    \\
    & &\qquad\qquad\quad\forall x\in\ball{\bar x}{r},\
    \forall z\in\ball{\bar x}{r}\cap\Solv{\IGE}. \nonumber
\end{eqnarray}
Now, it is proper to observe that
$$
  \dist{x}{\Solv{\IGE}}=\dist{x}{\Solv{\IGE}\cap\ball{\bar x}{r}},
  \quad\forall x\in\ball{\bar x}{r/2}.
$$
From the last equality, by taking into account inequality $(\ref{in:loclipcharfun})$,
one obtains
\begin{eqnarray}    \label{in:charfunledist}
  \charfun{F}{C}(x) &\le &\kappa\inf_{z\in\ball{\bar x}{r}\cap\Solv{\IGE}}\|x-z\|
   \\
  &=& \kappa\dist{x}{\Solv{\IGE}},\qquad\forall x\in\ball{\bar x}{r/2}.  \nonumber
\end{eqnarray}
By combining $(\ref{eq:limdist})$ with $(\ref{in:charfunledist})$,
one obtains
$$
  \lim_{n\to\infty} {\charfun{F}{C}(\bar x+t_nv)\over t_n}=0.
$$
This means that $v\in[\charfun{F}{C}'(\bar x;\cdot)\le 0]$, thereby
proving the first assertion in the thesis.

The second assertion can be proved in a similar manner, by making use
of the error bound estimate in $(\ref{in:erbo2})$.
\end{proof}

Another topic that can be developed as a consequence of error bounds
are penalty methods. In the present context, this can be done
for optimization problems, whose feasible region is
defined by a constraint system formalized as a $(\IGE)$ problem, i.e.
$$
  \min\vartheta(x) \qquad\hbox{ subject to }\qquad F(x)\subseteq C.
  \leqno (\mathcal{P})
$$
Given a solution $\bar x\in\Solv{\IGE}$ to $(\mathcal{P})$,
under a Lipschitz assumption on $\vartheta$ and the validity of
Theorem \ref{thm:solerbo}, it is possible to prove the existence
of a penalty parameter $\lambda>0$ such that $\bar x$ is also
solution to the unconstrained optimization problem
$$
  \min_{x\in\X}[\vartheta(x)+\lambda\charfun{F}{C}(x)],
$$
that is an exact penalization holds. Since this kind of result
can be proved by standard arguments (see, for instance, \cite[Theorem 5.2]{Uder19}),
the details are omitted here. What is more important to note is
that, whenever an exact penalization takes place, one can
develop optimality conditions for $(\mathcal{P})$, by exploiting the subdifferential
calculus rules, starting from the conditions valid for unconstrained
problems.

\vskip1cm


\section{Estimates via prederivatives}    \label{Sect:4}

The findings of the preceding section are expressed in terms of problem
data through the function $\charfun{F}{C}$. It comes natural to
investigate how the basic condition for solvability and error
bound, namely the positivity of the constant $\erbom{F}$, can be guaranteed in the case
the mapping $F$ is assumed to be locally approximated by another
set-valued mapping $H$, with a simpler structure. In what follows
this is done by employing outer prederivatives as a first-order
approximation of $F$ at a reference point.
To this aim, with a given p.h. set-valued mapping $H:\X\rightrightarrows\Y$,
let us associate the function $\charfan{H}:\X\longrightarrow\R\cup\{\mp\infty\}$,
defined by
$$
  \charfan{H}(v)=\sup_{b^*\in\Usfer^*}\supf{b^*}{H(v)},
  \quad\forall v\in\X.
$$
Notice that, by Lemma \ref{lem:charfunprops}, if $H$ takes
nonempty closed, bounded and convex values for every $x\in\X$, $\charfan{H}$
is p.h. and real-valued.

\begin{proposition}     \label{pro:preapprox}
Let $F:\X\rightrightarrows\Y$ be a set-valued mapping between Banach
spaces and let $x_0\in [\charfun{F}{C}>0]$. Suppose that:

(i) $F(x)\in\BConv(\Y)\backslash\{\varnothing\}$ for every $x\in\X$;

(ii) $F$ is concave on $\X$;

(iii) $F$ is locally bounded around some element of $\X$;

(iv) $C\in\BConv(\Y)\backslash\{\varnothing\}$;

(v) $F$ admits an outer prederivative $H:\X\rightrightarrows\Y$
at $x_0$ such that
$$
   H(\nullv)=\{\nullv\} \quad\hbox{ and }\quad
   H(x)\in\BConv(\Y)\backslash\{\varnothing\},\quad\forall x\in\X.
$$
Then, it holds
$$
  \charfun{F}{C}'(x_0;v)\le\charfan{H}(v),\quad\forall v\in\X
$$
and, consequently,
\begin{equation}     \label{in:subdifapproxhom}
    \partial\charfun{F}{C}(x_0)\subseteq\partial\charfan{H}(\nullv).
\end{equation}
\end{proposition}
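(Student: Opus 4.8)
The plan is to exploit the definition of outer prederivative to control the increments of each slice function $x\mapsto\supf{b^*}{F(x)}$ near $x_0$, and then pass to the supremum. First I would fix $v\in\X$ and $\epsilon>0$, and apply Definition \ref{def:outpreder}: there is $\delta>0$ such that $F(x_0+tv)\subseteq F(x_0)+H(tv)+\epsilon t\|v\|\Uball$ for all small $t>0$ (using $\|x-x_0\|=t\|v\|$ along the ray). Taking support functions of both sides and using Remark \ref{rem:supfprops}(iii) and (ii), together with the positive homogeneity $H(tv)=tH(v)$, I get
$$
  \supf{b^*}{F(x_0+tv)}\le\supf{b^*}{F(x_0)}+t\supf{b^*}{H(v)}+\epsilon t\|v\|,
  \quad\forall b^*\in\Uball^*.
$$

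**Passing to the characteristic functions.** Next I subtract $\supf{b^*}{C}$ from both sides, take the supremum over $b^*\in\Uball^*$, and rearrange into a difference quotient. Since $x_0\in[\charfun{F}{C}>0]$, by Remark \ref{rem:argmaxsphere} the active set $B_{x_0}$ lies in $\Usfer^*$; but more to the point, the supremum defining $\charfun{F}{C}$ may be taken over $\Uball^*$ while the supremum defining $\charfan{H}$ is over $\Usfer^*$ — here I would note that since $H(v)\in\BConv(\Y)$ and $\nullv^*$ contributes $\supf{\nullv^*}{H(v)}=0\le\charfan{H}(v)$ (the latter being nonnegative as $\nullv\in H(\nullv)=\{\nullv\}$ forces... actually one must argue $\charfan{H}(v)\ge 0$ via $\charfan{H}$ being sublinear with $\charfan{H}(\nullv)=0$), enlarging $\Usfer^*$ to $\Uball^*$ does not change the value $\charfan{H}(v)=\sup_{b^*\in\Uball^*}\supf{b^*}{H(v)}$. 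Thus
$$
  \charfun{F}{C}(x_0+tv)\le\charfun{F}{C}(x_0)+t\charfan{H}(v)+\epsilon t\|v\|,
$$
and dividing by $t>0$ and letting $t\downarrow 0$ gives $\charfun{F}{C}'(x_0;v)\le\charfan{H}(v)+\epsilon\|v\|$. Since $\epsilon>0$ is arbitrary, $\charfun{F}{C}'(x_0;v)\le\charfan{H}(v)$ for all $v\in\X$.

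**From directional derivatives to subdifferentials.** For the inclusion $(\ref{in:subdifapproxhom})$, I would invoke the Moreau–Rockafellar formula $(\ref{eq:MorRoc})$: under hypotheses (i)–(iv) the function $\charfun{F}{C}$ is convex and continuous (Lemma \ref{lem:charfuncon}), so $\charfun{F}{C}'(x_0;\cdot)=\supf{\cdot}{\partial\charfun{F}{C}(x_0)}$; likewise, since $H$ takes values in $\BConv(\Y)\backslash\{\varnothing\}$ the function $\charfan{H}$ is sublinear and real-valued (the remark after its definition), hence continuous, and $\charfan{H}(\cdot)=\charfan{H}'(\nullv;\cdot)=\supf{\cdot}{\partial\charfan{H}(\nullv)}$. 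The inequality $\supf{v}{\partial\charfun{F}{C}(x_0)}\le\supf{v}{\partial\charfan{H}(\nullv)}$ for all $v\in\X$ then yields the desired set inclusion by Remark \ref{rem:supfprops}(iii), since both subdifferentials are (weak${}^*$ compact, in particular) closed convex subsets of $\X^*$.

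**Main obstacle.** The routine part is the manipulation of support functions; the one point requiring care is the passage from the prederivative inclusion, which only holds on a neighbourhood $\ball{x_0}{\delta}$, to a statement about the one-sided directional derivative — this is clean because we only need points $x_0+tv$ with $t\downarrow 0$. A second subtlety is the index-set discrepancy ($\Usfer^*$ versus $\Uball^*$) in the definitions of $\charfun{F}{C}$ and $\charfan{H}$; I expect this to be the easiest thing to overlook, and the cleanest fix is to observe that $\charfan{H}$ is sublinear with $\charfan{H}(\nullv)=0$, so its values are $\ge 0$ and the supremum over the ball equals the supremum over the sphere (the origin never helps). No genuinely hard step is anticipated: the proposition is essentially a linearity-of-support-functions argument wrapped around the definition of outer prederivative.
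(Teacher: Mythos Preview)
Your approach is essentially the paper's --- prederivative inclusion, support functions, pass to the supremum, difference quotient, then Moreau--Rockafellar --- with one technical variation: you resolve the $\Usfer^*$/$\Uball^*$ mismatch on the right-hand side by arguing that $\sup_{b^*\in\Uball^*}\supf{b^*}{H(v)}=\charfan{H}(v)$, whereas the paper resolves it on the left by taking the supremum over $\Usfer^*$ and invoking Remark~\ref{rem:argmaxsphere} (for which it first secures a neighbourhood $\ball{x_0}{\delta}\subseteq[\charfun{F}{C}>0]$). Your route actually sidesteps the hypothesis $x_0\in[\charfun{F}{C}>0]$.

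One correction, however. You assert twice that $\charfan{H}$ is sublinear and then deduce $\charfan{H}\ge 0$ from ``sublinear with $\charfan{H}(\nullv)=0$''. Neither claim holds: $H$ is only assumed p.h., not concave, so $\charfan{H}$ is merely p.h.\ (the paper remarks on this explicitly just after the proposition); and even for sublinear functions the implication fails (linear functionals are sublinear and vanish at $\nullv$). The identity you need is nonetheless true by direct computation,
\[
  \sup_{b^*\in\Uball^*}\supf{b^*}{H(v)}=\sup_{y\in H(v)}\|y\|
  =\sup_{b^*\in\Usfer^*}\supf{b^*}{H(v)}=\charfan{H}(v),
\]
so the first assertion goes through. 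For the subdifferential inclusion you likewise do not need a Moreau--Rockafellar representation of $\charfan{H}$: once $\charfun{F}{C}'(x_0;v)\le\charfan{H}(v)$ for all $v$, any $x^*\in\partial\charfun{F}{C}(x_0)$ satisfies $\langle x^*,v\rangle\le\charfun{F}{C}'(x_0;v)\le\charfan{H}(v)-\charfan{H}(\nullv)$ for all $v$, which is precisely the membership $x^*\in\partial\charfan{H}(\nullv)$.
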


\begin{proof}
Observe first that, since under the hypotheses (i)-(iii) the function
$\charfun{F}{C}$ is continuous on $\X$ by Lemma \ref{lem:charfuncon},
then the set $[\charfun{F}{C}>0]$ is open. Consequently, there
exists $\delta_0>0$ such that $\ball{x_0}{\delta_0}\subseteq[\charfun{F}{C}>0]$.
Since $H$ is an outer prederivative of $F$ at $x_0$, fixed any $\epsilon>0$
there exists $\delta>0$ such that
$$
  F(x)\subseteq F(x_0)+H(x-x_0)+\epsilon\|x-x_0\|\Uball,
  \quad\forall x\in\ball{x_0}{\delta}.
$$
Without loss of generality, it is possible to take $\delta\in (0,\delta_0)$.
On the base of Remark \ref{rem:supfprops}(ii) and (iii), the above
inclusion implies for any $b^*\in\Uball^*$
$$
  \supf{b^*}{F(x)}\le\supf{b^*}{F(x_0)}+\supf{b^*}{H(x-x_0)}+
  \epsilon\|x-x_0\|,\quad\forall x\in\ball{x_0}{\delta},
$$
whence
\begin{eqnarray*}
  \supf{b^*}{F(x)}-\supf{b^*}{C} &\le& \supf{b^*}{F(x_0)}-\supf{b^*}{C}
  +\supf{b^*}{H(x-x_0)}  \\
  &+& \epsilon\|x-x_0\|,  \qquad\forall x\in
  \ball{x_0}{\delta}.
\end{eqnarray*}
By taking the supremum over the set $\Usfer^*$ in both sides of the
last inequality and recalling that $B_x\subseteq\Usfer^*$, as noted
in Remark \ref{rem:argmaxsphere}, provided that $x\in\ball{x_0}{\delta}
\subseteq[\charfun{F}{C}>0]$, one obtains
\begin{eqnarray*}    \label{in:supfhom}
  \charfun{F}{C}(x)&=&\sup_{b^*\in\Usfer^*}[\supf{b^*}{F(x)}-\supf{b^*}{C}] \\
  &\le& \sup_{b^*\in\Usfer^*}\bigl[\supf{b^*}{F(x_0)}-\supf{b^*}{C}
  +\supf{b^*}{H(x-x_0)}\bigl]+\epsilon\|x-x_0\| \\
  &\le& \charfun{F}{C}(x_0)+\charfan{H}(x-x_0)+\epsilon\|x-x_0\|,
  \quad\forall x\in \ball{x_0}{\delta}.
\end{eqnarray*}
Thus, if taking $x=x_0+tv$, with $t\in (0,\delta)$ and $v\in\Uball$,
it is clearly $x\in  \ball{x_0}{\delta}$ so, by the last inequality, it
results in
$$
  {\charfun{F}{C}(x_0+tv)-\charfun{F}{C}(x_0)\over t}\le
  \charfan{H}(v)+\epsilon,\quad\forall t\in (0,\delta),
  \ \forall v\in\Uball.
$$
By passing to the limit as $t\downarrow 0$ in the above inequality,
one finds
$$
  \charfun{F}{C}'(x_0;v)\le\charfan{H}(v)+\epsilon,
  \quad\forall v\in\Uball,
$$
which, by arbitrariness of $\epsilon>0$, gives
$$
  \charfun{F}{C}'(x_0;v)\le\charfan{H}(v),
  \quad\forall v\in\Uball.
$$
As $\charfun{F}{C}'(x_0;\cdot)$ and $\charfan{H}$ are p.h.
functions, the first assertion is the thesis follows.

The second assertion is a straightforward consequence of the first
one, because $\charfan{H}(\nullv)=0$ as it is $H(\nullv)=\{\nullv\}$
and, according to formula $(\ref{eq:MorRoc})$, it is $\charfun{F}{C}'
(x_0;\cdot)=\supf{\cdot}{\partial\charfun{F}{C}(x_0)}$ and
$\charfan{H}=\supf{\cdot}{\partial\charfan{H}(\nullv)}$.
This completes the proof.
\end{proof}

It is noteworthy that, under the hypotheses of Proposition \ref{pro:preapprox},
$\charfun{F}{C}$ is a continuous and convex function, so
$\partial\charfun{F}{C}(x_0)\ne\varnothing$. This entails that, even
though $\charfan{H}$ fails to be sublinear ($H$ being not necessarily
concave), in this circumstance it happens that $\partial\charfan{H}(\nullv)
\ne\varnothing$.

Hereafter, in order to provide verifiable conditions for the validity
of error bounds, $F$ will be assumed to admit special prederivatives,
which can be represented as fans generated by proper families of linear bounded
operators (remember Example \ref{ex:fan}).

In this concern,
given a weakly closed and convex subset $\mathcal{G}\subseteq\Lin(\X,\Y)$,
letting $\mathcal{G}^*=\{\Lambda^*\in\Lin(\Y^*,\X^*):\ \Lambda\in\mathcal{G}\}$,
define
$$
  \mathcal{G}^*(\Usfer^*)=\left\{x^*\in\X^*:\ x^*\in
  \bigcup_{\Lambda^*\in\mathcal{G}^*}\Lambda^*\Usfer^*
  \right\}
$$
and
$$
  \Bconst{\mathcal{G}^*}=\sup_{v\in\Uball}\inf_{x^*\in
  \mathcal{G}^*(\Usfer^*)}\langle x^*,v\rangle.
$$
Notice that, whereas for every $v\in\Uball$ it is
$\inf_{x^*\in\mathcal{G}^*(\Uball^*)}\langle x^*,v\rangle\le 0$
because $\nullv^*\in\mathcal{G}^*(\Uball^*)$, and hence
$\sup_{v\in\Uball}\inf_{x^*\in\mathcal{G}^*(\Uball^*)}\langle
x^*,v\rangle\le 0$, it may actually happen that $\Bconst{\mathcal{G}^*}
>0$, for a given $\mathcal{G}\subseteq\Lin(\X,\Y)$. The next
propositions show that such an event is a favourable circumstance
for the validity of an error bound.

\begin{proposition}    \label{pro:distnullv*}
Let $F:\X\rightrightarrows\Y$ be a set-valued mapping between Banach
spaces and let $x_0\in [\charfun{F}{C}>0]$.
Under the hypotheses of Proposition \ref{pro:preapprox}, suppose that
$H:\X\rightrightarrows\Y$ is a fan generated by a (nonempty) weakly closed,
bounded and convex set $\mathcal{G}_{x_0}\subseteq\Lin(\X,\Y)$,
satisfying the condition
\begin{equation}    \label{in:fanreg}
   \Bconst{\mathcal{G}^*_{x_0}}>0.
\end{equation}
Then, it holds
$$
  \Bconst{\mathcal{G}^*_{x_0}}\le
  \dist{\nullv^*}{\partial\charfun{F}{C}(x_0)}.
$$
In particular, it is
\begin{equation*}
    \nullv^*\not\in\partial\charfun{F}{C}(x_0).
\end{equation*}

\end{proposition}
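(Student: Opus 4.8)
The goal is to bound $\Bconst{\mathcal{G}^*_{x_0}}$ from below by the subdifferential slope $\dist{\nullv^*}{\partial\charfun{F}{C}(x_0)}$, and then read off the non-inclusion of $\nullv^*$. The natural route is to chain together three facts already at our disposal: first, Proposition~\ref{pro:preapprox} tells us that $\partial\charfun{F}{C}(x_0)\subseteq\partial\charfan{H}(\nullv)$, so it suffices to prove $\Bconst{\mathcal{G}^*_{x_0}}\le\dist{\nullv^*}{\partial\charfan{H}(\nullv)}$. Second, since $H=A_{\mathcal{G}_{x_0}}$ is the fan generated by $\mathcal{G}_{x_0}$, one has $H(v)=\{\Lambda v:\ \Lambda\in\mathcal{G}_{x_0}\}$, hence $\supf{b^*}{H(v)}=\sup_{\Lambda\in\mathcal{G}_{x_0}}\langle b^*,\Lambda v\rangle=\sup_{\Lambda^*\in\mathcal{G}^*_{x_0}}\langle\Lambda^*b^*,v\rangle$, so that
$$
  \charfan{H}(v)=\sup_{b^*\in\Usfer^*}\ \sup_{\Lambda^*\in\mathcal{G}^*_{x_0}}
  \langle\Lambda^*b^*,v\rangle=\sup_{x^*\in\mathcal{G}^*_{x_0}(\Usfer^*)}\langle x^*,v\rangle
  =\supf{v}{\mathcal{G}^*_{x_0}(\Usfer^*)}.
$$
Thus $\charfan{H}$ is the support function of the set $\mathcal{G}^*_{x_0}(\Usfer^*)\subseteq\X^*$, which also confirms $\partial\charfan{H}(\nullv)=\wclco{\,}\mathcal{G}^*_{x_0}(\Usfer^*)$ (by the Moreau--Rockafellar formula~(\ref{eq:MorRoc}) applied at $\nullv$, together with the fact that $\mathcal{G}_{x_0}$ bounded gives $\charfan{H}$ finite and continuous).

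The third and decisive ingredient is the elementary estimate recorded in Remark~\ref{rem:supfprops}(v): for any $D\in\Conv(\X^*)$ one has $-\inf_{v\in\Uball}\supf{v}{D}\le\dist{\nullv^*}{D}$. Applying this with $D=\wclco{\,}\mathcal{G}^*_{x_0}(\Usfer^*)=\partial\charfan{H}(\nullv)$, and noting that the support function of a set and of its weak${}^*$-closed convex hull coincide, we get
$$
  \Bconst{\mathcal{G}^*_{x_0}}=\sup_{v\in\Uball}\inf_{x^*\in\mathcal{G}^*_{x_0}(\Usfer^*)}\langle x^*,v\rangle
  =-\inf_{v\in\Uball}\Bigl(-\inf_{x^*\in\mathcal{G}^*_{x_0}(\Usfer^*)}\langle x^*,v\rangle\Bigr)
  =-\inf_{v\in\Uball}\supf{v}{D}\le\dist{\nullv^*}{D}=\dist{\nullv^*}{\partial\charfan{H}(\nullv)},
$$
where I have used $-\inf_{x^*\in S}\langle x^*,v\rangle=\sup_{x^*\in S}\langle x^*,-v\rangle=\supf{-v}{S}$ and the symmetry of the unit ball in the outer $\inf$ to rewrite $\Bconst{\mathcal{G}^*_{x_0}}$ in the form $-\inf_{v\in\Uball}\supf{v}{D}$. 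Combining this with the inclusion $\partial\charfun{F}{C}(x_0)\subseteq\partial\charfan{H}(\nullv)$ (which forces $\dist{\nullv^*}{\partial\charfan{H}(\nullv)}\le\dist{\nullv^*}{\partial\charfun{F}{C}(x_0)}$) yields the claimed inequality. Finally, since hypothesis~(\ref{in:fanreg}) gives $\Bconst{\mathcal{G}^*_{x_0}}>0$, we conclude $\dist{\nullv^*}{\partial\charfun{F}{C}(x_0)}>0$, i.e. $\nullv^*\not\in\partial\charfun{F}{C}(x_0)$.

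The only genuinely delicate point is the bookkeeping around weak${}^*$-closed convex hulls: one must be sure that passing from $\mathcal{G}^*_{x_0}(\Usfer^*)$ to $\partial\charfan{H}(\nullv)$ does not change the support function, and that $\partial\charfan{H}(\nullv)$ is indeed weak${}^*$-compact and convex so that Remark~\ref{rem:supfprops}(v) legitimately applies. Both follow from the boundedness of $\mathcal{G}_{x_0}$ (forcing $H$ to take values in $\BConv(\Y)$, whence $\charfan{H}$ is real-valued, p.h., hence sublinear and norm-continuous, so $\partial\charfan{H}(\nullv)$ is a nonempty weak${}^*$-compact convex set) together with the standard fact that support functions are insensitive to the weak${}^*$-closed convex hull. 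With these routine verifications in place the chain of inequalities above is the whole proof; no further ingredients are needed.
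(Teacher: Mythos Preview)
Your argument is correct and follows essentially the same route as the paper's proof: both invoke Proposition~\ref{pro:preapprox} to obtain the inclusion $\partial\charfun{F}{C}(x_0)\subseteq\partial\charfan{H}(\nullv)$, then use the estimate of Remark~\ref{rem:supfprops}(v) together with the Moreau--Rockafellar formula to reduce matters to computing $-\inf_{v\in\Uball}\charfan{H}(v)$, and finally identify this quantity with $\Bconst{\mathcal{G}^*_{x_0}}$ via the explicit form of the fan and the symmetry of $\Uball$. Your presentation is marginally more explicit in recognizing $\charfan{H}$ as the support function of $\mathcal{G}^*_{x_0}(\Usfer^*)$ (and hence $\partial\charfan{H}(\nullv)$ as its weak${}^*$-closed convex hull), whereas the paper works directly with $\charfan{H}$ via~(\ref{eq:MorRoc}); but this is a purely cosmetic difference.
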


\begin{proof}
Under the assumptions made, as it is $H(\nullv)=\{\Lambda\nullv:\
\Lambda\in\mathcal{G}_{x_0}\}=\{\nullv\}$ and
$H(x)\in\BConv(\Y)\backslash\{\varnothing\}$ for every $x\in\X$,
it is possible to apply Proposition \ref{pro:preapprox}, in such
a way to get inclusion $(\ref{in:subdifapproxhom})$.
Consequently, by taking into account the estimate recalled in Remark
\ref{rem:supfprops}(v) along with the representation in formula
$(\ref{eq:MorRoc})$, one has
\begin{eqnarray*}
   \dist{\nullv^*}{\partial\charfun{F}{C}(x_0)}&\ge&
   \dist{\nullv^*}{\partial\charfan{H}(\nullv)}\ge
   -\inf_{v\in\Uball}\supf{v}{\partial\charfan{H}(\nullv)}  \\
   &=& -\inf_{v\in\Uball}\charfan{H}(v).
\end{eqnarray*}
By recalling the definition of $\charfan{H}$ and of $H$, one obtains
\begin{eqnarray*}
   -\inf_{v\in\Uball}\charfan{H}(v) &=&-\inf_{v\in\Uball}\sup_{b^*\in\Usfer^*}
   \supf{b^*}{H(v)}=\sup_{v\in\Uball}\inf_{b^*\in\Usfer^*}\inf_{\Lambda\in\mathcal{G}_{x_0}}
   \inf\langle b^*,\Lambda(-v)\rangle \\
   &=&\sup_{v\in\Uball}\inf_{b^*\in\Usfer^*}\inf_{\Lambda^*\in\mathcal{G}^*_{x_0}}
   \langle\Lambda^*b^*,-v\rangle=\Bconst{\mathcal{G}^*_{x_0}}>0.
\end{eqnarray*}
The second assertion in the thesis comes as an obvious consequence
of the first one.
\end{proof}

In order to establish a solvability and global error bound result,
the condition formulated in $(\ref{in:fanreg})$ must be satisfied
all over $[\charfun{F}{C}>0]$. Such a requirement naturally leads
to introduce the following quantity
$$
  \flat_F=\inf_{x\in[\charfun{F}{C}>0]}\Bconst{\mathcal{G}^*_x}.
$$
Besides, given $\mathcal{G}\subseteq\Lin(\X,\Y)$, and hence
$\mathcal{G}^*=\{\Lambda^*\in\Lin(\Y^*,\X^*):\ \Lambda\in\mathcal{G}\}$,
define
$$
  \tilde{\mathcal{G}_x^*}(\Uball^*)=\left\{x^*\in\X^*:\ x^*\in
  \bigcup_{\Lambda^*\in\mathcal{G}^*}\Lambda^*\Uball^*,
  \right\}.
$$

\begin{corollary}      \label{cor:solverbofan}
With reference to a generalized equation of the form $(\IGE)$, suppose
that:

(i) $F(x)\in\BConv(\Y)\backslash\{\varnothing\}$ for every $x\in\X$;

(ii) $F$ is concave on $\X$;

(iii) $F$ is locally bounded around some element of $\X$;

(iv) $C\in\BConv(\Y)\backslash\{\varnothing\}$;

(v) $F$ admits at each point $x\in\X$ an outer prederivative, which
is a fan generated by a weakly closed, bounded and convex set
$\mathcal{G}_x\subseteq\Lin(\X,\Y)$;

(vi) it holds $\flat_F>0$.

\noindent Then, $\Solv{\IGE}\ne\varnothing$ and it holds
$$
  \dist{x}{\Solv{\IGE}}\le{\charfun{F}{C}(x)\over\flat_F},
  \quad\forall x\in\X.
$$
Moreover, if $\bar x\in\Solv{\IGE}$, it results in
$$
  \Tang{\Solv{\IGE}}{\bar x}\supseteq \bigcap_{x^*\in
  \mathcal{G}_{\bar x}^*(\Uball^*)}[x^*\le 0].
$$
\end{corollary}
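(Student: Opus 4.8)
The plan is to derive Corollary \ref{cor:solverbofan} by combining the quantitative estimate of Proposition \ref{pro:distnullv*} with the machinery of Theorem \ref{thm:solerbo} and Theorem \ref{thm:soltangchar}. First I would observe that hypotheses (i)--(iv) here coincide with (i)--(iv) of Theorem \ref{thm:solerbo}, so to invoke that theorem it suffices to show that $\erbom{F}=\inf\{\erboc{F}{x}:\ x\in[\charfun{F}{C}>0]\}>0$. Recall that, by formula $(\ref{eq:charfunsubd})$ in the proof of Theorem \ref{thm:solerbo}, one has $\erboc{F}{x}=\dist{\nullv^*}{\partial\charfun{F}{C}(x)}$ for every $x\in[\charfun{F}{C}>0]$. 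Fix any such $x$. By hypothesis (v), $F$ admits at $x$ an outer prederivative $H$ which is the fan generated by $\mathcal{G}_x$, and by hypothesis (vi) we have $\Bconst{\mathcal{G}_x^*}\ge\flat_F>0$, so condition $(\ref{in:fanreg})$ holds at $x$. Applying Proposition \ref{pro:distnullv*} then yields $\flat_F\le\Bconst{\mathcal{G}_x^*}\le\dist{\nullv^*}{\partial\charfun{F}{C}(x)}=\erboc{F}{x}$. Taking the infimum over $x\in[\charfun{F}{C}>0]$ gives $\erbom{F}\ge\flat_F>0$, which is precisely hypothesis (iv) of Theorem \ref{thm:solerbo}. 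Hence $\Solv{\IGE}\ne\varnothing$ and the error bound $(\ref{in:erbo1})$ holds; since $\erbom{F}\ge\flat_F$, the denominator can be replaced by $\flat_F$, yielding the displayed inequality $\dist{x}{\Solv{\IGE}}\le\charfun{F}{C}(x)/\flat_F$.

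For the tangential assertion, I would first note that since $\erbom{F}>0$ (just established) and all other hypotheses of Theorem \ref{thm:soltangchar} are met (with $\bar x\in\Solv{\IGE}$), that theorem applies and gives $\Tang{\Solv{\IGE}}{\bar x}=[\charfun{F}{C}'(\bar x;\cdot)\le 0]$. It remains to show the inclusion
$$
  [\charfun{F}{C}'(\bar x;\cdot)\le 0]\supseteq\bigcap_{x^*\in\mathcal{G}_{\bar x}^*(\Uball^*)}[x^*\le 0].
$$
By Proposition \ref{pro:preapprox} applied at $\bar x$ (whose hypotheses are guaranteed by (i)--(v) here, noting that the fan generated by $\mathcal{G}_{\bar x}$ sends $\nullv$ to $\{\nullv\}$ and takes bounded closed convex values since $\mathcal{G}_{\bar x}$ is bounded), one has $\charfun{F}{C}'(\bar x;v)\le\charfan{H}(v)$ for all $v\in\X$, where $H=A_{\mathcal{G}_{\bar x}}$. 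Now for any $v$, by the computation already carried out in the proof of Proposition \ref{pro:distnullv*},
$$
  \charfan{H}(v)=\sup_{b^*\in\Usfer^*}\supf{b^*}{H(v)}=\sup_{b^*\in\Usfer^*}\sup_{\Lambda\in\mathcal{G}_{\bar x}}\langle\Lambda^* b^*,v\rangle=\sup_{x^*\in\mathcal{G}_{\bar x}^*(\Usfer^*)}\langle x^*,v\rangle,
$$
and since $\mathcal{G}_{\bar x}^*(\Usfer^*)$ and $\mathcal{G}_{\bar x}^*(\Uball^*)$ have the same supremum against any fixed $v$ by positive homogeneity, $\charfan{H}(v)=\sup_{x^*\in\mathcal{G}_{\bar x}^*(\Uball^*)}\langle x^*,v\rangle$. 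Therefore, if $v$ lies in $\bigcap_{x^*\in\mathcal{G}_{\bar x}^*(\Uball^*)}[x^*\le 0]$, i.e. $\langle x^*,v\rangle\le 0$ for all such $x^*$, then $\charfan{H}(v)\le 0$, hence $\charfun{F}{C}'(\bar x;v)\le 0$, i.e. $v\in[\charfun{F}{C}'(\bar x;\cdot)\le 0]=\Tang{\Solv{\IGE}}{\bar x}$. This proves the claimed inclusion.

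The main obstacle I anticipate is bookkeeping rather than conceptual: one must be careful that Proposition \ref{pro:distnullv*} is being invoked at a \emph{single} point $x$ with its own fan $\mathcal{G}_x$, and that the uniform lower bound $\flat_F$ transfers the pointwise estimate into the global constant $\erbom{F}$ needed by Theorem \ref{thm:solerbo}. A secondary subtlety is checking that hypothesis (iii) of Proposition \ref{pro:preapprox} (local boundedness around \emph{some} element) is compatible with the way that proposition is applied at each $x\in[\charfun{F}{C}>0]$ — but this is immediate, since (iii) here is exactly that hypothesis. One should also keep in mind that the stray quantity $\tilde{\mathcal{G}_x^*}(\Uball^*)$ defined just before the corollary is not actually used in this argument; only $\mathcal{G}_{\bar x}^*(\Uball^*)$ appears, for which the support-function identity above is the crux.
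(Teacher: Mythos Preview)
Your argument for the first assertion is correct and essentially identical to the paper's: both pass through Proposition~\ref{pro:distnullv*} pointwise on $[\charfun{F}{C}>0]$ to obtain $\erbom{F}\ge\flat_F>0$ and then invoke Theorem~\ref{thm:solerbo}.

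There is, however, a genuine gap in your proof of the tangential inclusion. You invoke Proposition~\ref{pro:preapprox} at $\bar x$, but that proposition carries the standing hypothesis $x_0\in[\charfun{F}{C}>0]$, whereas here $\bar x\in\Solv{\IGE}$ means $\charfun{F}{C}(\bar x)=0$. This hypothesis is not cosmetic: the proof of Proposition~\ref{pro:preapprox} uses it (via Remark~\ref{rem:argmaxsphere}) to replace $\sup_{b^*\in\Uball^*}$ by $\sup_{b^*\in\Usfer^*}$ for points $x$ near $x_0$, so that the bound can be expressed through $\charfan{H}$, which is defined with $\Usfer^*$. At a solution $\bar x$ this replacement is unavailable, and your appeal to the proposition is therefore unjustified as stated.

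The fix is easy and is exactly what the paper does: instead of citing Proposition~\ref{pro:preapprox}, rerun its computation directly at $\bar x$ but keep the supremum over $\Uball^*$ throughout. This yields
\[
  \charfun{F}{C}(x)\le\charfun{F}{C}(\bar x)+\sup_{b^*\in\Uball^*}\supf{b^*}{H(x-\bar x)}+\epsilon\|x-\bar x\|
\]
for $x$ near $\bar x$, hence $\charfun{F}{C}'(\bar x;v)\le\sup_{b^*\in\Uball^*}\supf{b^*}{H(v)}$, with no need for the $[\charfun{F}{C}>0]$ restriction. From there your identification $\sup_{b^*\in\Uball^*}\supf{b^*}{H(v)}=\sup_{x^*\in\mathcal{G}_{\bar x}^*(\Uball^*)}\langle x^*,v\rangle$ and the remainder of your argument go through unchanged.
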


\begin{proof}
In the light of Theorem \ref{thm:solerbo} and Proposition \ref{pro:distnullv*},
the first assertion in the thesis follows at once from the inequality chain
\begin{eqnarray*}
   \erbom{F} &=& \inf_{x\in[\charfun{F}{C}>0]}\stsl{\charfun{F}{C}}
   (x)=\inf_{x\in[\charfun{F}{C}>0]}\dist{\nullv^*}{\partial\charfun{F}{C}(x)}
   \\
   &\ge &\inf_{x\in[\charfun{F}{C}>0]}\Bconst{\mathcal{G}^*_x}
   =\flat_F>0.
\end{eqnarray*}
As for the second assertion, fixed $\bar x\in\Solv{\IGE}$, by Theorem \ref{thm:soltangchar}
one has $\Tang{\Solv{\IGE}}{\bar x}=[\charfun{F}{C}'(\bar x;\cdot)\le 0]$.
Since $F$ admits as an outer prederivative at $\bar x$ the fan generated
by $\mathcal{G}_{\bar x}$, by reasoning as in the proof of Proposition
\ref{pro:preapprox} one finds
\begin{eqnarray*}
  \charfun{F}{C}(x)&=&\sup_{b^*\in\Uball^*}[\supf{b^*}{F(x)}-\supf{b^*}{C}] \\
  &\le& \sup_{b^*\in\Uball^*}\bigl[\supf{b^*}{F(\bar x)}-\supf{b^*}{C}
  +\supf{b^*}{H(x-x_0)}\bigl]+\epsilon\|x-x_0\| \\
  &\le& \charfun{F}{C}(\bar x)+\sup_{b^*\in\Uball^*}\supf{b^*}{H(x-x_0)}+
  \epsilon\|x-x_0\|, \quad\forall x\in \ball{x_0}{\delta},
\end{eqnarray*}
for a proper $\delta>0$. This evidently implies
\begin{equation}    \label{in:dirdersupfH}
  \charfun{F}{C}'(\bar x;v)\le \sup_{b^*\in\Uball^*}\supf{b^*}{H(v)},
  \quad\forall v\in\X.
\end{equation}
By making use of the dual representation of $H$, one has
$$
  \sup_{b^*\in\Uball^*}\supf{b^*}{H(v)}=\sup_{b^*\in\Uball^*}
  \sup_{\Lambda^*\in\mathcal{G}_{\bar x}^*}\langle\Lambda^*b^*,
  v\rangle=\sup_{x^*\in\mathcal{G}_{\bar x}^*(\Uball^*)}\langle x^*,
  v\rangle, \quad\forall v\in\X.
$$
Thus, it is $\sup_{b^*\in\Uball^*}\supf{b^*}{H(v)}\le 0$ iff
$$
  v\in [x^*\le 0],\quad\forall x^*\in\mathcal{G}_{\bar x}^*(\Uball^*).
$$
This fact, on account of inequality $(\ref{in:dirdersupfH})$, shows
the validity of the inclusion in the thesis, thereby completing the proof.
\end{proof}

\begin{remark}
From Corollary the proof of \ref{cor:solverbofan} one sees that, at the price of approximating
$F$ with outer prederivatives, the satisfaction  of the basic condition
$\erbom{F}>0$ can be achieved by imposing $\flat_F>0$. As a comment to
the latter condition, it could be relevant to  point out that, fixed
any $x_0\in[\charfun{F}{C}>0]$, whenever the equality
\begin{equation}   \label{eq:minmax}
    \sup_{v\in\Uball}\inf_{x^*\in\mathcal{G}_{x_0}^*(\Usfer^*)}
    \langle x^*,v\rangle=\inf_{x^*\in\mathcal{G}_{x_0}^*(\Usfer^*)}
    \sup_{v\in\Uball}\langle x^*,v\rangle
\end{equation}
holds true, one would be enabled to express $\flat_F>0$ in terms of
Banach constants. Indeed, it is evident that
$$
  \inf_{x^*\in\mathcal{G}_{x_0}^*(\Usfer^*)}\sup_{v\in\Uball}\langle x^*,v\rangle
  =\inf_{\Lambda^*\in\mathcal{G}_{x_0}^*}\inf_{u^*\in\Usfer^*}\|\Lambda^*u^*\|.
$$
The quantity $\dBconst{\Lambda}=\inf_{u^*\in\Usfer^*}\|\Lambda^*u^*\|=
\dist{\nullv^*}{\Lambda^*\Usfer^*}$
is known in variational analysis as dual Banach constant of $\Lambda$ and, together with
the primal Banach constant, i.e. the quantity $\Bconst{\Lambda}=\sup_{y\in\Usfer}\inf\{\|x\|:\ x\in
\Lambda^{-1}(y)\}=\sup_{y\in\Usfer}\dist{\nullv}{\Lambda^{-1}(y)}$, provides a
quantitative estimate for the property of $\Lambda\in\Lin(\X,\Y)$ to be
open at a linear rate, namely such that $\Lambda\Uball\supseteq\alpha\Uball$,
for some constant $\alpha>0$. Historically, a qualitative characterization of this property
was already established in the Banach-Schauder theorem, stating that $\Lambda$
is open at a linear rate iff it is an epimorphism. The modern development of
variational analysis complemented the statement of the above theorem adding that,
whenever this happens, then setting $\sur\Lambda=\sup\{\alpha>0:\
\Lambda\Uball\supseteq\alpha\Uball\}$, the following quantitative relations
are true
$$
  \Bconst{\Lambda}<+\infty,\qquad\dBconst{\Lambda}>0,\qquad
  \Bconst{\Lambda}\cdot\dBconst{\Lambda}=1,
$$
and
$$
  \sur\Lambda=\dBconst{\Lambda}={1\over \Bconst{\Lambda}}
$$
(see, for instance, \cite[Section 1.2.3]{Mord06}).
Thus, under the validity of the equality $(\ref{eq:minmax})$, a kind of
uniform openness at a linear rate for each fan $\mathcal{G}_x$ at points
$x\in [\charfun{F}{C}>0]$ implies $\flat_F>0$. This fact seems to reveal
a connection of the solvability and error bound theory for $(\IGE)$
problems with one of the possible manifestation of metric regularity,
a well-known property in variational analysis playing a key role
in the study of the solution stability of generalized equations of type
$(\GE)$ (see \cite{DonRoc14,Robi80}).
Connections of this type have started to be explored also in
\cite{Uder19}.
\end{remark}

An analogous scheme of analysis can be reproduced in the case $C$
is assumed to be a close, convex cone, leading to formulate a
condition for the positivity of $\derbom{F}$.

\vskip2cm



\begin{thebibliography}{99}

\bibitem{AubFra09} J.-P. Aubin and H.  H. Frankowska, {\it Set-valued analysis},
Birkh\"auser Boston, Boston, MA, 2009.

\bibitem{AzeCor06} D. Az\'e and J. - N. Corvellec, {\it Variational Methods
in Classical Open Mapping Theorems},  J. Convex Analysis {\bf 13}  (2006),
477--488.

\bibitem{Borw81} J. M. Borwein, {\it Convex relations in analysis and
optimization}, in Generalized Concavity in Optimization and Economics,
S. Schaible , W. T. Ziemba (eds.), New York, Academic Press, 1981,
335--377.

\bibitem{Borw82} J. M. Borwein, {\it Continuity and differentiability properties
of convex operators}, Proc. London Math. Soc. {\bf 44}(3) (1982), 420--444.

\bibitem{Borw17} J. M. Borwein, {\it Generalisations, examples, and
counter-examples in analysis and optimisation}, Set-Valued Var. Anal.
{\bf 25}(3) (2017), 467--479.

\bibitem{Cast99} M. Castellani, {\it Error bounds for set-valued maps}, Generalized
convexity and optimization for economic and financial decisions,
121--135, Pitagora, Bologna, 1999.

\bibitem{DonRoc14} A.L. Dontchev and  R.T. Rockafellar, {\it Implicit
functions and solution mappings. A view from variational analysis},
Springer, New York, 2014.

\bibitem{FaHeKrOu10} M.J. Fabian, R. Henrion, A.Y. Kruger, and J.V. Outrata,
{\it Error bounds: necessary and sufficient conditions}, Set-Valued Var. Anal.
{\bf 18}(2) (2010), 121--149.

\bibitem{Fare88} R. F\"are, {\it Fundamentals of production theory}, Springer-Verlag,
Berlin, 1988.


\bibitem{GaGeMa16} M. Gaydu, M.H. Geoffroy, and Y. Marcelin, {\it Prederivatives
of convex set-valued maps and applications to set optimization problems},
J. Glob. Optim. {\bf 64} (2016), 141--158.

\bibitem{Ioff81} A.D. Ioffe, {\it Nonsmooth analysis: Differential calculus
of nondifferentiable mappings}, Trans. Amer. Math. Soc. {\bf 266}(1) (1981),
1--56.

\bibitem{Ioff00}  A.D. Ioffe, {\it Metric regularity and subdifferential calculus},
Uspekhi Mat. Nauk {\bf 55}(3) (2000), 103--162.

\bibitem{Izma14} A.F. Izmailov, {\it Strongly regular nonsmooth generalized
equations} Math. Program. {\bf 147}(1--2) (2014), Ser. A, 581--590.

\bibitem{Jahn04} J. Jahn, {\it Vector optimization. Theory, applications,
and extensions}, Springer-Verlag, Berlin, 2004.

\bibitem{Mord94} B. S. Mordukhovich, {\it Stability theory for parametric generalized
equations and variational inequalities via nonsmooth analysis}, Trans. Amer. Math.
Soc. {\bf 343}(2) (1994), 609--657.

\bibitem{Mord06} B. S. Mordukhovich, {\it Variational analysis and generalized
differentiation. I. Basic theory}, Springer-Verlag, Berlin, 2006.

\bibitem{Pang11} C.H.J. Pang, {\it Generalized Differentiation with Positively
Homogeneous Maps: Applications in Set-Valued Analysis and Metric Regularity},
Math. Oper. Res. {\bf 36}(3) (2011),  377--397.

\bibitem{Robi79} S. M. Robinson, {\it Generalized equations and their solutions.
I. Basic theory. Point-to-set maps and mathematical programming}, Math.
Programming Stud. {\bf 10} (1979), 128--141.

\bibitem{Robi80} S. M. Robinson, {\it Strongly regular generalized equations},
Math. Oper. Res. {\bf 5}(1) (1980), 43--62.


\bibitem{Robi83} S. M. Robinson, {\it Generalized Equations}, Mathematical programming:
the state of the art (Bonn, 1982), 346--367, Springer, Berlin, 1983.

\bibitem{Schi07} W. Schirotzek, {\it Nonsmooth analysis}, Springer, Berlin, 2007.

\bibitem{Uder19} A. Uderzo, {\it On some generalized equations with metrically
$C$-increasing mappings: solvability and error bounds with applications to
optimization}, Optimization {\bf 68} (2019), 227--253.

\bibitem{Zali02} C. Z\u alinescu, {\it Convex analysis in general vector spaces},
World Scientific Publishing Co., River Edge, NJ, 2002.

\end{thebibliography}
\end{document}